\newtheorem{theorem}{Theorem}[section]
\newtheorem{conjecture}[theorem]{Conjecture}
\newtheorem{corollary}[theorem]{Corollary}
\newtheorem{example}[theorem]{Example}
\newtheorem{lemma}[theorem]{Lemma}
\newtheorem{problem}[theorem]{Problem}
\newtheorem{remark}[theorem]{Remark}
\newenvironment{proof}[1][Proof]{\noindent\textbf{#1.} }{\ \rule{0.5em}{0.5em}}
\begin{document}

\title{A survey of topological Zimmer's program}
\author{Shengkui Ye}
\maketitle

\begin{abstract}
In this article, we survey the status of topological Zimmer's conjecture on
matrix group actions on manifolds.
\end{abstract}

\section{Introduction}

In mathematics, there are two important basic objects: the integers $\mathbb{%
Z}$ and the real line $\mathbb{R}$. The former is algebraic while the latter
is geometric. If we take products, we could get the finitely generated
torsion-free abelian group $\mathbb{Z}^{n}$ and the Euclidean space $\mathbb{%
R}^{k}.$ The automorphism group of $\mathbb{Z}^{n}$ is the general linear
group $\mathrm{GL}_{n}(\mathbb{Z})$ consisting of all invertible integral
matrices. Usually, we consider only the subgroup $\mathrm{SL}_{n}(\mathbb{Z}%
) $ of determinant-one matrices, which is called the special linear group.
Let $M$ be a manifold, which is a geometric object locally homeomorphic to $%
\mathbb{R}^{k}.$ We have a natural question:

\begin{problem}
How does the algebraic object $\mathrm{SL}_{n}(\mathbb{Z})$ act on the
geometric object $M?$
\end{problem}

There is an obvious action: the linear action of $\mathrm{SL}_{n}(\mathbb{Z}%
) $ on $\mathbb{R}^{n}.$ On one hand, the linear action preserves the
origin. There is an induced action of $\mathrm{SL}_{n}(\mathbb{Z})$ on the
sphere $S^{n-1}=\{x\in \mathbb{R}^{n}\mid \Vert x\Vert =1\},$ given by 
\begin{equation*}
(A,x)\longmapsto \frac{Ax}{\Vert Ax\Vert }
\end{equation*}%
for any $A\in \mathrm{SL}_{n}(\mathbb{Z})$ and $x\in S^{n-1}.$ On the other
hand, the linear action of $\mathrm{SL}_{n}(\mathbb{Z})$ preserves the
integral points $\mathbb{Z}^{n}$ in $\mathbb{R}^{n}.$ Therefore, there is
another induced action of $\mathrm{SL}_{n}(\mathbb{Z})$ on the torus $T^{n}=%
\mathbb{R}^{n}/\mathbb{Z}^{n}.$ Except for these actions, what are other
actions? The following conjecture says that when the dimension of the
manifold $M$ is smaller, any action should be a finite-group action. Let $%
\mathrm{Homeo}(M)$ (resp. $\mathrm{Diff}(M)$) be the group consisting of all
self-homeomorphisms (resp. self-diffeomorphisms) of $M.$

\begin{conjecture}
\label{conj}Any group action of $\mathrm{SL}_{n}(\mathbb{Z})$ $(n\geq 3)$ on
a connected compact manifold $M^{r}$ $(r<n-1)$ by homeomorphisms factors
through a finite group. In other words, any group homomorphism from $\mathrm{%
SL}_{n}(\mathbb{Z})$ to the homeomorphism group $\mathrm{Homeo}(M)$ has a
finite image.
\end{conjecture}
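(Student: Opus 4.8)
The plan is to marry the algebraic rigidity of $\mathrm{SL}_n(\mathbb{Z})$ with the dynamical rigidity forced by the low dimension of $M$, and the first move is purely algebraic. Write $\rho\colon\mathrm{SL}_n(\mathbb{Z})\to\mathrm{Homeo}(M)$ and $K=\ker\rho$. Since $n\geq 3$, the group $\mathrm{SL}_n(\mathbb{Z})$ is an irreducible lattice in $\mathrm{SL}_n(\mathbb{R})$, a Lie group of real rank $n-1\geq 2$, so Margulis's Normal Subgroup Theorem applies: the normal subgroup $K$ is either finite (central) or of finite index. In the finite-index case $\rho(\mathrm{SL}_n(\mathbb{Z}))\cong\mathrm{SL}_n(\mathbb{Z})/K$ is finite and we are done, so the whole problem collapses to ruling out the central case. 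I claim it therefore suffices to prove that $\rho(E_{ij})$ has finite order for a single elementary matrix $E_{ij}$: if $d=\mathrm{ord}(\rho(E_{ij}))<\infty$, then $E_{ij}^{\,d}\in K$ is an element of infinite order, so $K$ is infinite, hence of finite index, hence $\rho$ has finite image.

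The key structural input that makes the elementary matrices tractable is that they are \emph{distortion elements} when $n\geq 3$. The Heisenberg relation gives $[E_{ij}(m),E_{jk}(m)]=E_{ik}(m^2)$ for distinct indices $i,j,k$, so $E_{ik}(m^2)$ is a single commutator of words of length $O(m)$; consequently the word length of $E_{ik}(N)$ grows like $O(\sqrt N)$. A distortion homeomorphism of a compact manifold is dynamically very rigid: under a homogeneous quasimorphism such as a translation or rotation number it must vanish, its action on $H_1(M)$ is quasi-unipotent, and its entropy-type invariants collapse. The strategy is to feed these constraints back into the rich supply of commuting elements surrounding $E_{ik}$ inside $\mathrm{SL}_n(\mathbb{Z})$.

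To upgrade ``distorted'' to ``finite order'' I would exploit the abundant commuting torsion. The signed permutation and diagonal matrices furnish elementary abelian $p$-subgroups whose rank grows with $n$, and $\mathrm{SL}_n(\mathbb{Z})$ has Kazhdan's property $(\mathrm{T})$, so one has good control on invariant sets. After passing to a finite-index subgroup $\Gamma$ acting trivially on $H^*(M;\mathbb{F}_p)$, each $g\in\Gamma$ has Lefschetz number $\chi(M)$ and the torsion elements become amenable to Smith theory, whose Borel-type inequalities tie an effective $(\mathbb{Z}/p)^k$-action on $M^r$ to the size of $r$. The hypothesis $r<n-1$ enters here to force the relevant fixed-point sets of the torsion subgroups to be nonempty and low-dimensional, so that the germ (local) representation of a distorted $E_{ik}$ at such a fixed point lands in a linear group too small to support an infinite-order, homologically trivial, zero-entropy element except by having finite order.

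The hard part will be exactly this last, genuinely dynamical step: converting the distortion estimate, the Smith-theoretic fixed-point data, and the homological triviality into the conclusion that $\rho(E_{ij})$ has finite order. For the base dimensions the argument can be closed with existing technology: for $r=1$ one invokes the Witte-Morris/Ghys rigidity of lattice actions on $S^1$ (distortion forces rotation number zero, hence a fixed point), and for $r=2$ the Franks--Handel theory of surface homeomorphisms plays the same role. Pushing this surface-level input up to general $r$ — showing that a distorted, homologically trivial homeomorphism commuting with a large torsion group must act with finite order — is precisely the open core of the conjecture, and I expect any complete proof to proceed one value of $r$ at a time, inducting on $n$ through the embedded copies of $\mathrm{SL}_{n-1}(\mathbb{Z})$ and their own distortion and fixed-point structure.
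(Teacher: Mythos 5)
You are attempting to prove a statement that is, in fact, an open conjecture: the paper is a survey of its status, offers no proof, and in the topological ($C^{0}$) category the conjecture is only known in special cases (dimension $r=1$ by Witte, spheres by Bridson--Vogtmann, tori by Weinberger, flat and nil-manifolds and manifolds with $\chi(M)\not\equiv 0 \bmod 6$ by Ye). Judged on its own terms, your proposal is a research program rather than a proof, and you concede the decisive point yourself: the step converting distortion of $\rho(E_{ij})$, Smith-theoretic fixed-point data, and homological triviality into ``$\rho(E_{ij})$ has finite order'' is exactly what you call the ``open core of the conjecture.'' That step is the entire content of the problem. Everything preceding it is sound but standard, and it mirrors how the survey frames the known partial results: your use of Margulis's Normal Subgroup Theorem to reduce finiteness of the image to exhibiting one infinite-order element in the kernel is precisely the role of the Margulis--Kazhdan lemma in the paper, and your torsion-plus-Smith-theory component is the engine behind the Bridson--Vogtmann sphere theorem and the Euler-characteristic criterion. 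But those techniques, as the survey makes clear, only close the argument on manifolds with special topology (spheres, acyclic spaces, nonvanishing Euler characteristic, flat or nilpotent structure); nobody knows how to run them on an arbitrary connected compact $M^{r}$.

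Beyond the self-acknowledged gap, two of your supporting claims fail in the topological category, which is the whole point of this version of the conjecture. First, your proposed base case $r=2$: Franks--Handel theory concerns distortion elements in groups of area-preserving $C^{1}$ diffeomorphisms, not homeomorphisms; no $C^{0}$ analogue exists, and the paper treats the topological surface case as \emph{conditional} on the still-open conjecture about actions of torsion-free finite-index subgroups on $\mathbb{R}^{2}$. Second, the dynamical rigidity you attribute to distorted homeomorphisms (``entropy-type invariants collapse,'' quasi-unipotence on homology, germ representations at fixed points) is either only available in higher regularity --- e.g.\ the zero-entropy consequence of distortion needs finiteness of topological entropy, which holds for $C^{1}$ maps but can fail for homeomorphisms --- or is not a theorem at all; fixed points of a topological action need not even have manifold fixed-point sets or meaningful local linearizations, which is exactly why the paper works with homology manifolds and Smith theory instead. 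This regularity chasm is why Brown--Fisher--Hurtado settles the $C^{2}$ case while the statement you were asked about remains open: your outline, if it could be completed, would be a solution to the open problem, not a reconstruction of an existing proof.
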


Conjecture \ref{conj} is a special case of a general conjecture in Zimmer's
program \cite{zi3}, in which the special linear group is replaced by an
arbitrary irreducible lattice $\Gamma $ in a semisimple Lie group $G$ of $%
\mathbb{R}$-rank at least 2, and the integer $n$ is replaced by a suitable
integer $h(G).$ The smooth version of Conjecture \ref{conj} was formulated
by Farb and Shalen \cite{fs}, which is related to the original Zimmer's
program. The topological version of Conjecture \ref{conj} has been discussed
by Weinberger \cite{sw}. These conjectures are part of a program to
generalize the Margulis Superrigidity Theorem to a non-linear context. In
general, it is difficult to prove these conjectures. A recent breakthrough
is that Brown-Fisher-Hurtado \cite{bhf} confirms Conjecture \ref{conj} for ($%
C^{2}$-)smooth actions. In this survey, we focus on topological actions.
Note that the topological actions could be very different from smooth
actions. Furthermore, not every topological manifold is smoothable and not
every topological manifold is triangulizable (see Freedman and Quinn \cite%
{FQ}, Chapter 8). For more details of Zimmer's program and related topics,
see survey articles of Zimmer and Morris \cite{zm}, Fisher \cite{fi} and
Labourie \cite{la}.

\section{Understand the conjecture}

First of all, the inequality in Conjecture \ref{conj} is sharp since $%
\mathrm{SL}_{n}(\mathbb{Z})$ acts non-trivially on the sphere $S^{n-1}$ when 
$n\geq 2$ (i.e. the group homomorphism $\mathrm{SL}_{n}(\mathbb{Z}%
)\rightarrow \mathrm{Homeo}(S^{n-1})$ induced by the linear transformations
has infinite image).

\subsection{Why is it for compact manifolds?}

The following result is well-known (eg. see \cite{ker}).

\begin{lemma}
Let $G=\langle g_{1},g_{2},\cdots ,g_{n}\mid r_{1},r_{2},\cdots
,r_{m}\rangle $ be a finitely presented group. There exists a closed
4-dimensional manifold $M$ with the fundamental group $\pi _{1}(M)\cong G.$
\end{lemma}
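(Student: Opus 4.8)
The plan is to construct $M$ explicitly from the presentation by the standard surgery/handle-attachment procedure. Starting from the given finite presentation $G=\langle g_1,\ldots,g_n \mid r_1,\ldots,r_m\rangle$, I would build a manifold by the following steps and then verify the fundamental group with the Seifert--van Kampen theorem.

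First I would realize the generators. Take the connected sum of $n$ copies of $S^1\times S^3$, call it $N = \#_{i=1}^n (S^1\times S^3)$. This is a closed orientable $4$-manifold, and by van Kampen its fundamental group is the free group $F_n=\langle g_1,\ldots,g_n\rangle$, with each generator $g_i$ represented by the $S^1$-factor of the $i$-th summand. The point of working in dimension $4$ (rather than, say, using wedges of circles) is that each generating loop has an embedded tubular neighborhood diffeomorphic to $S^1\times D^3$, which is what makes the relators killable by surgery.

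Next I would kill the relators by surgery. Each relator $r_j$ is a word in the $g_i$, hence represents a loop $\gamma_j$ in $N$; since $\dim N=4>2$, I can isotope these loops to be disjoint embedded circles with trivial normal bundle, so each has a neighborhood $S^1\times D^3\subset N$. Performing surgery on $\gamma_j$ means removing $S^1\times D^3$ and gluing in $D^2\times S^2$ along the common boundary $S^1\times S^2$. The effect on $\pi_1$, again computed by van Kampen, is exactly to add the relation $r_j=1$, because the attached $D^2$ provides a disk bounding $\gamma_j$. Doing this for all $j=1,\ldots,m$ produces a closed $4$-manifold $M$, and van Kampen gives $\pi_1(M)\cong \langle g_1,\ldots,g_n \mid r_1,\ldots,r_m\rangle = G$.

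The main obstacle, and the step requiring the most care, is the embedding and framing claim: that the loops $\gamma_j$ representing the relators can simultaneously be made disjoint, smoothly embedded, and equipped with a trivialization of their normal bundles so that the surgery is well-defined. Generic position (transversality) in dimension $4$ makes a single loop embedded and lets one separate finitely many of them, and orientability of $N$ guarantees the normal bundle of a circle is trivial, so a framing exists; one must simply fix such a framing to specify the gluing. The rest of the argument is a routine van Kampen bookkeeping: the free group $\pi_1(N)=F_n$ has each relator word successively normally generated to $1$, yielding the desired presentation of $G$. No further difficulty arises, and compactness and closedness are preserved throughout since surgery on a closed manifold yields a closed manifold.
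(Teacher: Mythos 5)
Your proof is correct, but it follows a genuinely different route from the paper's. You realize the free group as $\pi_1$ of $\#_{i=1}^n(S^1\times S^3)$ and then kill the relators by surgery on disjoint embedded framed circles; the van Kampen bookkeeping is right (removing $S^1\times D^3$ does not change $\pi_1$ because $S^1\times S^2\hookrightarrow S^1\times D^3$ is a $\pi_1$-isomorphism, and gluing in $D^2\times S^2$ kills exactly the normal closure of the conjugacy class of $r_j$, which is all one needs since normal closures are conjugation-insensitive), and your points about general position and the triviality of orientable $D^3$-bundles over $S^1$ dispose of the embedding and framing issues. The paper instead builds the presentation $2$-complex $Y=\bigl(\vee_{i=1}^n S^1\bigr)\cup(\text{2-cells})$, embeds it in $\mathbb{R}^5$, takes a regular neighborhood $N$, and uses $M=\partial N$; there the key suppressed point is that the inclusion $\partial N\hookrightarrow N$ is a $\pi_1$-isomorphism because the spine $Y$ has codimension $3$, so loops and disks can be pushed off $Y$ into the boundary by general position. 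The trade-off: the paper's argument is shorter but leans on regular-neighborhood theory for complexes (not just manifolds) and leaves the codimension-$3$ argument implicit, whereas your surgery argument stays entirely inside smooth manifold topology, makes orientability of $M$ evident by construction, and gives an explicit handle description; both are standard Kervaire-type constructions and both generalize to produce closed $k$-manifolds with prescribed finitely presented fundamental group for every $k\geq 4$.
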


\begin{proof}
The manifold could be constructed explicitly. Let $X=\vee _{i=1}^{n}S^{1}$
be the wedge of $n$ circles. For each relator $r_{i},$ attach a 2-cell to $%
X. $ The resulting space is denoted by $Y,$ with $\pi _{1}(Y)\cong G.$ Let $%
Y\hookrightarrow \mathbb{R}^{5}$ be an embedding with a tubular neighborhood 
$N.$ The boundary $\partial N$ is a smooth closed 4-dimensional manifold
with fundamental group $G.$
\end{proof}

\bigskip

It follows that the group $G$ will act freely on the universal cover $\tilde{%
M},$ which is still a 4-dimensional manifold. When $n\geq 3,$ the special
linear group $\mathrm{SL}_{n}(\mathbb{Z})$ has a finite presentation (see
for example Milnor \cite{mi}, Corollary 10.3, p.81). Therefore, the group $%
\mathrm{SL}_{n}(\mathbb{Z})$ can act freely on a non-compact 4-dimensional
manifold for any $n\geq 3$. This means that we have to assume the manifold $%
M $ is compact in Conjecture \ref{conj}. However, we can still prove some
results for non-compact manifolds with vanishing Euler characteristics (see
Theorem \ref{bvacy}, Theorem \ref{ye}).

\subsection{Why is it finite, not trivial?}

Belolipetsky and Lubotzky \cite{bl} show that for every $n\geq 2$, every
finite group $G$ is realized as the full isometry group of some compact
hyperbolic $n$-manifold. The cases $n=2$ and $n=3$ have been proven by
Greenberg \cite{1974} and Kojima \cite{koj}, respectively.

\begin{lemma}
For any $k\geq 2,$ there exists a closed manifold $M^{k}$ such that $\mathrm{%
SL}_{n}(\mathbb{Z})$ $(n\geq 3)$ acts on $M$ non-trivially.
\end{lemma}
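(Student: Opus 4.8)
The plan is to factor a non-trivial action through a finite quotient of $\mathrm{SL}_n(\mathbb{Z})$ and then realize that finite group geometrically, using exactly the Belolipetsky--Lubotzky result quoted just above. First I would produce a non-trivial finite quotient: reduction modulo a prime $p$ gives a surjection $\pi\colon \mathrm{SL}_n(\mathbb{Z})\twoheadrightarrow \mathrm{SL}_n(\mathbb{Z}/p\mathbb{Z})$, and for $n\geq 2$ (in particular $n\geq 3$) the target $G:=\mathrm{SL}_n(\mathbb{Z}/p\mathbb{Z})$ is a non-trivial finite group. Note that one cannot instead use the abelianization, since $\mathrm{SL}_n(\mathbb{Z})$ is perfect for $n\geq 3$; the congruence quotients are the natural source of non-trivial finite images.

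Next, for the given dimension $k\geq 2$, I would invoke the Belolipetsky--Lubotzky theorem to obtain a closed hyperbolic $k$-manifold $M^k$ whose full isometry group is exactly $G$. Because the isometry group of a Riemannian manifold acts effectively by homeomorphisms, this yields an injection $\iota\colon G\hookrightarrow \mathrm{Homeo}(M^k)$ with non-trivial image. Finally, composing the two maps gives a homomorphism $\iota\circ\pi\colon \mathrm{SL}_n(\mathbb{Z})\to \mathrm{Homeo}(M^k)$ whose image equals $\iota(G)\cong G\neq 1$; hence $\mathrm{SL}_n(\mathbb{Z})$ acts on $M^k$ non-trivially, as required.

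The only genuinely substantial input is the construction of a closed hyperbolic manifold in each dimension $\geq 2$ realizing a prescribed finite group as its isometry group; once that is granted (as it is in the cited result), every remaining step is formal. I would emphasize in a closing remark that this lemma is precisely the point of the subsection: it shows that the word \emph{finite} in Conjecture \ref{conj} cannot be strengthened to \emph{trivial}, because the congruence quotients $\mathrm{SL}_n(\mathbb{Z}/p\mathbb{Z})$ already act non-trivially---indeed faithfully---on closed manifolds of every dimension $k\geq 2$.
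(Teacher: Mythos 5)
Your proof is correct and follows the same route as the paper: reduce $\mathrm{SL}_n(\mathbb{Z})$ modulo an integer to obtain a finite congruence quotient, then realize that quotient as the full isometry group of a closed hyperbolic $k$-manifold via Belolipetsky--Lubotzky, and compose. The only cosmetic difference is that the paper reduces modulo an arbitrary integer $a>1$ rather than a prime, which changes nothing.
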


\begin{proof}
Let $a>1$ be a positive integer. The quotient ring homomorphism $\mathbb{%
Z\rightarrow Z}/a$ induces a surjection $\mathrm{SL}_{n}(\mathbb{Z}%
)\rightarrow \mathrm{SL}_{n}(\mathbb{Z}/a).$ Note that $\mathrm{SL}_{n}(%
\mathbb{Z}/a)$ is finite. Let $M$ be a hyperbolic manifold $M$ of dimension $%
k$ whose isometry group is $\mathrm{SL}_{n}(\mathbb{Z}/a).$ Then $\mathrm{SL}%
_{n}(\mathbb{Z})$ acts on $M$ non-trivially through $\mathrm{SL}_{n}(\mathbb{%
Z}/a).$
\end{proof}

\bigskip

This means that we can only expect the finiteness, rather than the
triviality of the image $\mathrm{SL}_{n}(\mathbb{Z})\rightarrow \mathrm{Homeo%
}(M)$ in Conjecture \ref{conj}. However, it is surprising that most
confirmed cases for Conjecture \ref{conj} indeed have trivial images (see
the next section for the status).

\section{The status}

There are many results on the smooth Zimmer program (eg. see the book \cite%
{brown}). Since this survey focuses on the topological case, we consider
only the actions by homeomorphisms. The following is a rough list of the
status of Conjecture \ref{conj}:

\begin{itemize}
\item When $r=1,$ Conjecture 1.1 is already proved by Witte \cite{wi} (see
also Ghys \cite{gh} and Burger-Monod \cite{bm} for more general lattices,
and Navas \cite{nav} for the $C^{1+\alpha }$-action $(\alpha >\frac{1}{2})$
of Kazhdan groups).

\item Weinberger \cite{we} confirms the conjecture when $M=T^{r}$ is a torus.

\item Bridson and Vogtmann \cite{bv} confirm the conjecture when $M=S^{r}$
is a sphere.

\item We confirm the conjecture for product of two spheres (\cite{ye16}),
flat manifolds \cite{ye19}, nilpotent manifolds \cite{ye181} and

\item Orientable manifolds with nonzero Euler characteristics modulo $6$
(see \cite{ye182}).
\end{itemize}

The proofs of the above mentioned results (except the cases of $%
M=S^{1},T^{r} $) use torsion elements in $\mathrm{SL}_{n}(\mathbb{Z}).$
Actually, in the above cases stronger results can be proved: the group
actions in Conjecture \ref{conj} are trivial, not just finite. However, the
following question is still open:

\begin{conjecture}
\label{1.2}Any group action of a torsion-free finite-index subgroup of $%
\mathrm{SL}_{n}(\mathbb{Z})$ on a $\mathbb{R}^{r}$ $(2\leq r<n)$ by
homeomorphisms factors through a finite group.
\end{conjecture}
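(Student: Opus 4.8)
The plan is to reduce the statement to a nonexistence result for faithful actions, and then to transport the problem from the contractible space $\mathbb{R}^r$ to the sphere $S^r$ by one-point compactification.

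\textbf{Reduction via the normal subgroup theorem.} Let $\Gamma \le \mathrm{SL}_n(\mathbb{Z})$ be a torsion-free finite-index subgroup and let $\rho : \Gamma \to \mathrm{Homeo}(\mathbb{R}^r)$ be a homomorphism. Being of finite index, $\Gamma$ is again an irreducible lattice in $\mathrm{SL}_n(\mathbb{R})$, which has real rank $n-1 \ge 2$, so the Margulis normal subgroup theorem applies to $\Gamma$: every normal subgroup is either finite or of finite index. Since $\Gamma$ is torsion-free, a finite normal subgroup is trivial. Applying this to $\ker \rho$, either $\ker\rho$ has finite index, in which case the image is finite and we are done, or $\ker\rho$ is trivial and $\rho$ is faithful. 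Thus it suffices to prove that no torsion-free finite-index subgroup of $\mathrm{SL}_n(\mathbb{Z})$ acts faithfully on $\mathbb{R}^r$ for $2 \le r < n$.

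\textbf{Compactification.} Every self-homeomorphism of $\mathbb{R}^r$ is proper, so $\rho$ extends uniquely to an action $\hat\rho : \Gamma \to \mathrm{Homeo}(S^r)$ on the one-point compactification $S^r = \mathbb{R}^r \cup \{\infty\}$, fixing the added point $\infty$. As $\rho$ is the restriction of $\hat\rho$ to $S^r \setminus \{\infty\}$, the image of $\rho$ is finite if and only if the image of $\hat\rho$ is finite. The problem therefore becomes: bound the image of an action of $\Gamma$ on $S^r$ possessing a global fixed point.

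\textbf{The accessible range and the gap.} When $r \le n-2$, i.e. $r < n-1$, the dimension falls in the range of the sphere case of Conjecture \ref{conj} treated by Bridson and Vogtmann \cite{bv}, so one expects $\hat\rho$ to have finite image. The subtlety is that the known sphere arguments produce fixed points from torsion elements via Smith theory, whereas here $\Gamma$ is torsion-free by hypothesis; what one gains in exchange is the \emph{a priori} global fixed point $\infty$ of $\hat\rho$, together with Kazhdan's property (T), which $\Gamma$ inherits from $\mathrm{SL}_n(\mathbb{Z})$. The idea here is to use the Steinberg commutation relations among the elementary unipotent subgroups to control the germ of the action at $\infty$ and to propagate this control over $S^r$. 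The genuinely open case is $r = n-1$: here $\mathrm{SL}_n(\mathbb{Z})$ does act on $S^{n-1}$ through the linear action of Section 1, but that action has \emph{no} global fixed point (it has no common eigendirection), so the presence of $\infty$ ought to force $\hat\rho$ to degenerate.

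\textbf{The main obstacle.} The essential difficulty, and the reason the conjecture remains open, is the contractibility of $\mathbb{R}^r$: unlike the torus (Weinberger \cite{we}), where one studies the induced representation on $H_1$, or the sphere, where one exploits the top homology and torsion-produced fixed points, here there is no nontrivial homology or homotopy for $\Gamma$ to act upon, and the torsion-based Smith-theoretic techniques are unavailable by assumption. A successful argument would need a genuinely dynamical input — a topological analogue of cocycle superrigidity, or a bounded-cohomology/distortion estimate controlling the exponentially distorted unipotent one-parameter subgroups of $\Gamma$ near the fixed point $\infty$, in the spirit of Witte's solution \cite{wi} for $r=1$. Obtaining such control for the distorted unipotents is the step I expect to be the crux.
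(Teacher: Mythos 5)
A point of framing first: the statement you were asked to prove is presented in the paper as an \emph{open} problem --- it is introduced with the words ``the following question is still open'' --- so the paper contains no proof to compare against, and a correct argument would be a new theorem, not a reconstruction. Your proposal, by your own admission, is not such an argument. The two reductions you do carry out are correct: the Margulis normal subgroup theorem (the paper's Lemma \ref{normal}, with torsion-freeness ruling out the central case) reduces the conjecture to excluding faithful actions, and since every self-homeomorphism of $\mathbb{R}^r$ is proper, any action extends to the one-point compactification $S^r$ with $\infty$ as a global fixed point. Both steps are sound, but both are standard and neither touches the real difficulty.

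The genuine gap is everything after that. The appeal to Bridson--Vogtmann in the range $r\le n-2$ fails for exactly the reason you yourself flag: their argument (and Theorem \ref{bvacy}, which already handles $\mathbb{R}^k$, $k<n$, for the full group $\mathrm{SL}_n(\mathbb{Z})$ with no compactification needed) is Smith-theoretic and runs entirely on the finite abelian subgroups $(\mathbb{Z}/3)^{[n/2]}$ and $(\mathbb{Z}/2)^{n-1}$ of $\mathrm{SL}_n(\mathbb{Z})$; a torsion-free finite-index subgroup $\Gamma$ contains no torsion at all, so the method yields nothing. This is precisely why the conjecture is open and why the paper stresses that all confirmed cases (except $r=1$ and the torus) use torsion elements. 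What remains of your proposal --- ``use the Steinberg commutation relations to control the germ at $\infty$,'' ``a topological analogue of cocycle superrigidity,'' ``a bounded-cohomology/distortion estimate in the spirit of Witte \cite{wi}'' --- is a list of hoped-for tools, not a proof: no lemma is stated, no estimate is made, and no mechanism is exhibited by which property (T) or distortion of unipotents constrains a topological action in dimension $\ge 2$. Note that Witte's argument for $r=1$ rests on left-orderability of countable subgroups of $\mathrm{Homeo}_+(\mathbb{R})$ combined with the non-left-orderability of finite-index subgroups of $\mathrm{SL}_n(\mathbb{Z})$, and orderability has no known analogue in higher dimensions, so ``in the spirit of Witte'' does not point to an actual strategy. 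In short: correct but routine reductions, followed by an honest acknowledgment that the core of the problem is untouched; the statement remains as open after your proposal as before it.
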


\subsection{Some basic facts}

The following result on the structure of $\mathrm{SL}_{n}(\mathbb{Z})$ will
be helpful in later discussions.

\begin{lemma}
\label{normal}Let $N$ be a normal subgroup $\mathrm{SL}_{n}(\mathbb{Z}%
),n\geq 3.$ Then either $N$ lies in the center of $\mathrm{SL}_{n}(\mathbb{Z}%
)$ (and hence it is finite) or the quotient group $\mathrm{SL}_{n}(\mathbb{Z}%
)/N$ is finite.
\end{lemma}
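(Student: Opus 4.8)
The plan is to derive the dichotomy from Margulis's Normal Subgroup Theorem, and then to sharpen the finite alternative to centrality by a short direct argument. Write $\Gamma=\mathrm{SL}_{n}(\mathbb{Z})$, regarded as an irreducible lattice in the semisimple Lie group $G=\mathrm{SL}_{n}(\mathbb{R})$. Since the $\mathbb{R}$-rank of $G$ equals $n-1$, the hypothesis $n\geq 3$ places us in the higher-rank regime $\mathbb{R}\text{-rank}(G)\geq 2$, where Margulis's theorem applies and asserts that any normal subgroup $N\trianglelefteq\Gamma$ is either finite or of finite index. The second alternative is exactly the statement that $\Gamma/N$ is finite, so it remains to show that in the first alternative $N$ lies in the center, and that the center is itself finite.

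For the center, a central element commutes in particular with all elementary matrices, which forces it to be a scalar matrix $\lambda I$; integrality together with $\det=1$ then gives $\lambda\in\{\pm 1\}$ with $\lambda^{n}=1$, so $Z(\Gamma)=\{\pm I\}$ for $n$ even and $Z(\Gamma)=\{I\}$ for $n$ odd. In either case $Z(\Gamma)$ is finite, which disposes of the parenthetical claim. Now suppose $N$ is finite and normal. Choose a torsion-free normal subgroup $\Gamma_{0}\trianglelefteq\Gamma$ of finite index (for instance the principal congruence subgroup $\Gamma(3)$). Since $N$ is finite it is a torsion group, so $N\cap\Gamma_{0}=\{1\}$; as $N$ and $\Gamma_{0}$ are both normal, every commutator $[x,y]$ with $x\in N$, $y\in\Gamma_{0}$ lies in $N\cap\Gamma_{0}=\{1\}$, whence $N$ centralizes $\Gamma_{0}$. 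By the Borel density theorem $\Gamma_{0}$ is Zariski dense in $\mathrm{SL}_{n}(\mathbb{R})$, so its centralizer consists of scalar matrices; therefore $N\subseteq Z(\Gamma)$. This yields the desired dichotomy: $N$ is central (and finite), or $\Gamma/N$ is finite.

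The main obstacle is the Normal Subgroup Theorem itself, whose proof is genuinely deep. The standard argument combines two inputs: if $N$ is infinite, one shows by Margulis's boundary (factor) theory for the $G$-action on $G/P$ that the quotient $\Gamma/N$ is amenable, while Kazhdan's property (T), which $\Gamma$ inherits from the higher-rank group $G$, passes to $\Gamma/N$; since an amenable group with property (T) is finite, $\Gamma/N$ must be finite. An alternative, purely arithmetic route bypasses boundary theory and instead proves directly that every non-central normal subgroup of $\mathrm{SL}_{n}(\mathbb{Z})$ contains a principal congruence subgroup $\Gamma(m)=\ker\!\big(\mathrm{SL}_{n}(\mathbb{Z})\to\mathrm{SL}_{n}(\mathbb{Z}/m)\big)$ for some $m\geq 1$; as $\mathrm{SL}_{n}(\mathbb{Z}/m)$ is finite, this again forces finite index. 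This is the Bass--Milnor--Serre and Mennicke approach, and there the difficulty is concentrated in the commutator identities for Mennicke symbols needed to manufacture the congruence subgroup.
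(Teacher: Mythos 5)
Your proposal is correct and rests on the same foundation as the paper: the paper offers no proof of this lemma at all, simply citing the Margulis--Kazhdan normal subgroup theorem in the remark that follows it (see \cite{z84}, Theorem 8.1.2). The only difference is that the version cited there already concludes $N\subseteq Z(\mathrm{SL}_{n}(\mathbb{Z}))$ in the finite case, so your (correct) extra step upgrading ``finite'' to ``central'' via the commutator trick with a torsion-free congruence subgroup and Borel density becomes unnecessary when one quotes that stronger form.
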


\begin{remark}
The result holds more generally for an irreducible lattice $\Gamma $ in a
connected irreducible semisimple Lie group $G$ of real rank $\geq 2,$ which
is called the Margulis-Kazhdan theorem (see \cite{z84}, Theorem 8.1.2).
\end{remark}

The following is a fact about the splitting of exact sequences of groups
(for a proof see Brown \cite{Br}, Theorem 6.6, p.105).

\begin{lemma}
\label{split}Let 
\begin{equation*}
1\rightarrow N\rightarrow G\rightarrow Q\rightarrow 1
\end{equation*}%
be a short exact sequence of groups. Denote by $Z(N)$ the center of $N.$ If
the second cohomology group $H^{2}(Q;Z(N))=0$ and $Q$ acts trivially on $N,$
then the exact sequence is splitting.
\end{lemma}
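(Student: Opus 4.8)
The plan is to prove the splitting by the explicit cocycle (Schreier) method, correcting a set-theoretic section of $G\rightarrow Q$ into a genuine homomorphism. First I would choose a normalized set-theoretic section $s\colon Q\rightarrow G$ with $s(1)=1$ and $\pi s=\mathrm{id}_{Q}$, where $\pi\colon G\rightarrow Q$ is the quotient map. Conjugation by $s(q)$ restricts to an automorphism $\phi_{q}\in\mathrm{Aut}(N)$ of the normal subgroup $N$, and the hypothesis that $Q$ acts trivially on $N$ should be read as saying that the induced outer action $Q\rightarrow\mathrm{Out}(N)$ is trivial; that is, each $\phi_{q}$ is an inner automorphism $\iota_{a_{q}}$ for some $a_{q}\in N$. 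Replacing $s(q)$ by $a_{q}^{-1}s(q)$, I may therefore assume from the outset that every $s(q)$ centralizes $N$.

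With such a section in hand, I would examine the factor set $f(q_{1},q_{2})=s(q_{1})s(q_{2})s(q_{1}q_{2})^{-1}\in N$. The key computation is that, because each $s(q)$ centralizes $N$, the element $f(q_{1},q_{2})$ in fact commutes with every element of $N$ and hence lies in the center $Z(N)$. Moreover $Q$ acts trivially on $Z(N)$ (again because the $s(q)$ centralize $N\supseteq Z(N)$), and the associativity of multiplication in $G$ forces $f$ to satisfy the $2$-cocycle identity. Thus $f$ determines a class in $H^{2}(Q;Z(N))$ for the trivial $Q$-module $Z(N)$.

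Now I would invoke the hypothesis $H^{2}(Q;Z(N))=0$: the cocycle $f$ must be a coboundary, so there is a map $g\colon Q\rightarrow Z(N)$ with $g(1)=1$ and $f(q_{1},q_{2})=g(q_{1})g(q_{2})g(q_{1}q_{2})^{-1}$. Finally I would set $\tilde{s}(q)=g(q)^{-1}s(q)$ and verify that $\tilde{s}$ is a homomorphism: using that $g(q_{2})\in Z(N)$ commutes with $s(q_{1})$ and substituting the coboundary expression for $f$, the factor set cancels and one obtains $\tilde{s}(q_{1})\tilde{s}(q_{2})=\tilde{s}(q_{1}q_{2})$. Since $g(q)\in N=\ker\pi$, the map $\tilde{s}$ still satisfies $\pi\tilde{s}=\mathrm{id}_{Q}$, so it splits the sequence.

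The step I expect to be the main obstacle is the correct handling of the non-abelian kernel: one must first use triviality of the outer action to pass to a centralizing section, and only after this reduction does the obstruction genuinely live in $H^{2}(Q;Z(N))$ rather than in a non-abelian analogue. Keeping the cocycle and coboundary conventions consistent, so that the final modification truly yields a homomorphism, is the routine but error-prone part; everything else reduces to the standard theory of group extensions.
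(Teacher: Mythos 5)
Your proof is correct, and it supplies something the paper itself omits: the paper gives no argument for this lemma, deferring instead to Brown's \emph{Cohomology of Groups} (Theorem 6.6, p.~105), i.e.\ the theory of extensions with non-abelian kernel, in which the equivalence classes of extensions inducing a fixed outer action $\psi\colon Q\rightarrow \mathrm{Out}(N)$ form (when non-empty) a principal homogeneous space under $H^{2}(Q;Z(N))$. From that statement the lemma follows because the trivial outer action is realized by the direct product $N\times Q$, so the vanishing of $H^{2}(Q;Z(N))$ leaves that as the only extension up to equivalence, forcing the given extension to split. Your cocycle argument is precisely the constructive content of that theorem specialized to trivial $\psi$: choose a set-theoretic section, use triviality of the outer action to replace it by one centralizing $N$, observe that the factor set then takes values in $Z(N)$ and is a $2$-cocycle for the trivial module structure, and kill it by a coboundary. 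You also identified the one genuinely delicate point correctly: since $N$ may be non-abelian, the hypothesis that ``$Q$ acts trivially on $N$'' can only sensibly mean that the induced outer action is trivial, and this is exactly how the lemma is used in the paper (e.g.\ with $N=\pi_{1}(\Sigma_{g})$, which is non-abelian and centerless, and with $N=\pi_{1}(M)$ for aspherical $M$). So your route and the paper's citation rest on the same underlying mathematics; yours has the merit of being self-contained, while the citation buys generality (the full torsor classification) that the lemma does not need.
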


\subsection{Group actions on 1-manifolds}

In this section, we will discuss the results proved by Witte \cite{wi}.

Recall that a group $G$ acts effectively on a space $X$ if the induced group
homomorphism $G\rightarrow \mathrm{Homeo}(X)$ is injective. The following
result is well-known (cf. \cite{na}, Theorem 2.2.19, p.38).

\begin{lemma}
If a countable group $G$ acts effectively on the real line $\mathbb{R}$ by
orientation-preserving homeomorphisms, then the group $G$ is left orderable.
In other words, there is a total order $<$ on $G$ such that $gh<gh^{\prime }$
whenever $h<h^{\prime }$ for any $g\in G.$
\end{lemma}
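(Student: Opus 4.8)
The plan is to construct a left-invariant total order on $G$ directly from the action, rather than appeal to any abstract criterion. The naive attempt, comparing $g$ and $h$ by the positions of their images $g(x_{0})$ and $h(x_{0})$ of a single basepoint $x_{0}$, fails: the stabilizer of $x_{0}$ may be nontrivial, so we could not distinguish distinct elements that happen to fix $x_{0}$. To repair this, I would fix once and for all a countable dense sequence $(x_{n})_{n\geq 1}$ in $\mathbb{R}$ (for instance an enumeration of $\mathbb{Q}$) and compare group elements by their \emph{first coordinate of disagreement}.

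Concretely, for $g\neq h$ in $G$, effectiveness says that $g$ and $h$ are distinct homeomorphisms, so the set $\{x\in\mathbb{R}\mid g(x)\neq h(x)\}$ is open and nonempty; by density it contains some $x_{n}$. Let $n(g,h)$ be the least such index, and declare $g<h$ precisely when $g(x_{n(g,h)})<h(x_{n(g,h)})$ in $\mathbb{R}$. First I would check that this is a strict total order: trichotomy is immediate from the well-ordering of the indices, and transitivity follows by a short case analysis on $a=n(g,h)$ and $b=n(h,k)$, using that below $\min\{a,b\}$ the three elements $g,h,k$ agree on the chosen points and that at $\min\{a,b\}$ the surviving inequality points the right way; one finds $n(g,k)=\min\{a,b\}$ and $g<k$.

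The crucial step is left-invariance, and this is exactly where the orientation-preserving hypothesis enters, since an orientation-preserving self-homeomorphism of $\mathbb{R}$ is strictly increasing. Given $g<h$ with first disagreement at index $n=n(g,h)$ and any $f\in G$, I would observe that $fg$ and $fh$ agree on every $x_{m}$ with $m<n$ (because $g$ and $h$ do, and $f$ is applied to equal values) while $f\bigl(g(x_{n})\bigr)<f\bigl(h(x_{n})\bigr)$ since $f$ is increasing. Hence $n(fg,fh)=n$ and $fg<fh$. The main obstacle is really only the bookkeeping needed to make the comparison well defined and transitive; once the "first point of disagreement" order is in place, left-invariance is a one-line consequence of the monotonicity of orientation-preserving homeomorphisms.
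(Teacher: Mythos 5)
Your proposal is correct and is essentially identical to the paper's own proof: both fix a countable dense subset of $\mathbb{R}$ (e.g.\ $\mathbb{Q}$), order group elements lexicographically by the first point of disagreement, and use the fact that orientation-preserving homeomorphisms are strictly increasing to get left-invariance. You simply spell out the details (openness of the disagreement set, transitivity) that the paper leaves as ``not hard to check.''
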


\begin{proof}
Choose a dense countable subset $\{x_{n}\}_{n=1}^{+\infty }$ (eg. rational
numbers $\mathbb{Q}$) of $\mathbb{R}.$ For two elements $f,g\in G,$ we
define $f<g$ if $f(x_{n})<g(x_{n})$ for the first $n$ with $f(x_{n})\neq
g(x_{n})$ but $f(x_{i})=g(x_{i}),i=1,2,\cdots ,n-1.$ It's not hard to check
that this an order. Since any $h\in G$ is orientation-preserving, we have $%
hf(x_{n})<hf(x_{n}).$ This shows that the order is a left order.
\end{proof}

\bigskip

Conversely, a countable left-orderable group $G$ acts effectively on a real
line. But when $G$ is uncountable (and left-orderable), the result may not
be true (cf. Mann \cite{mann}).

\begin{lemma}
Let $\Gamma $ be a finite-index subgroup of $\mathrm{SL}_{n}(\mathbb{Z})$ $%
(n\geq 3).$ Then $\Gamma $ is not left-orderable.
\end{lemma}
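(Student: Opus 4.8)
The plan is to argue by contradiction, assuming $\Gamma$ carries a left-invariant total order. By the previous lemma together with the converse remark following it, for a countable group a left order is equivalent to a faithful action on $\mathbb{R}$ by orientation-preserving homeomorphisms, so I will move freely between the order and such an action (its \emph{dynamical realization}). Since left-orderability passes to subgroups, and since the top-left block gives an embedding $\mathrm{SL}_3(\mathbb{Z})\hookrightarrow \mathrm{SL}_n(\mathbb{Z})$ whose intersection with $\Gamma$ is finite-index in $\mathrm{SL}_3(\mathbb{Z})$, I may reduce to $n=3$. Inside $\mathrm{SL}_3(\mathbb{Z})$ the last-column unipotents form a copy of $\mathbb{Z}^2$, normalized by the top-left $\mathrm{SL}_2(\mathbb{Z})$ acting by the standard representation; intersecting with $\Gamma$ yields a finite-index subgroup $\Lambda\le \mathrm{SL}_2(\mathbb{Z})\ltimes\mathbb{Z}^2$ still containing the normal copy $A\cong\mathbb{Z}^2$ and two hyperbolic elements $g_1,g_2\in \mathrm{SL}_2(\mathbb{Z})$ with \emph{distinct} eigendirections on $A\otimes\mathbb{R}$. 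This transversality is exactly what the hypothesis $n\ge 3$ (real rank $\ge 2$) provides.

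Restricting the order of $\Gamma$ to the abelian group $A\cong\mathbb{Z}^2$ gives an honest order on $A$, recorded by a direction $\theta\in\mathbb{RP}^1$ together with finite combinatorial data. The key step is to show that the hyperbolicity of each $g_i$ constrains $\theta$ to its attracting eigendirection: conjugation by $g_i$ transports the order on $A$ to another order, and in the dynamical realization the iterates $g_i^k$ push every direction toward the attracting fixed point of the projective action of $g_i$ on $\mathbb{RP}^1$, so consistency of the single order on $\Lambda$ pins $\theta$ to an eigendirection of $g_i$. Since $g_1$ and $g_2$ were chosen with disjoint fixed-point sets on $\mathbb{RP}^1$, no single $\theta$ can be an eigendirection of both, a contradiction. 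A more structural packaging of the soft part uses the Burns--Hale criterion together with Lemma \ref{normal}: a left order produces on a finitely generated subgroup a nontrivial left-orderable (hence infinite torsion-free) quotient, which the Margulis normal subgroup theorem forces to be merely a central quotient of $\Gamma$; but that returns us to a group of the same type, so the genuine contradiction must still come from the order analysis on $A$.

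The main obstacle is precisely this analysis on $A$, because left-orderability, unlike bi-orderability, has no local obstruction: every piece in sight is individually left-orderable (the Heisenberg group, all poly-$\mathbb{Z}$ groups such as $\mathbb{Z}^2\rtimes_{g_i}\mathbb{Z}$, and every Baumslag--Solitar group $\mathrm{BS}(1,m)$), so no single relation among elementary matrices can be incompatible with a left order. The contradiction must therefore be global and exploit the full higher-rank geometry, namely two transverse Anosov elements acting on the same lattice $A$; upgrading the heuristic ``iteration forces the attracting eigendirection'' into a rigorous control of how conjugation moves the restricted order along $\mathbb{RP}^1$ is the technical heart of Witte's argument. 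Finally, the case of an action on the circle reduces to the line by passing to the finite-index subgroup with vanishing rotation number (and a common fixed point), so the statement for $\mathbb{R}$ is the essential content.
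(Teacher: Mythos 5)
Your reduction to $n=3$ and the soft Margulis/Burns--Hale remarks are fine, but the key step of your argument is false, not merely unproven. You claim that consistency of the single left order on $\Lambda$ forces the direction $\theta$ of the restricted order on $A\cong\mathbb{Z}^{2}$ to be an eigendirection of each hyperbolic $g_{i}$. That is a \emph{bi-order} constraint, not a left-order constraint: the positive cone of a left order is invariant only under left multiplication, so conjugation by $g_{i}$ need not preserve $P\cap A$ at all, and the ``iterates push $\theta$ toward the attracting fixed point'' heuristic has nothing to act on. Concretely, in $H=\mathbb{Z}^{2}\rtimes_{M}\mathbb{Z}$ with $M$ hyperbolic, the lexicographic cone
\begin{equation*}
P=\{(v,n):n>0\}\cup \{(v,0):v\in P_{0}\},
\end{equation*}
where $P_{0}$ is the half-plane order on $\mathbb{Z}^{2}$ determined by an \emph{arbitrary} irrational direction, is a semigroup satisfying trichotomy with no compatibility whatsoever between $M$ and $P_{0}$ (only bi-invariance would force $MP_{0}=P_{0}$, i.e. an eigendirection). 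So $H$ carries left orders restricting to any prescribed order on $\mathbb{Z}^{2}$, and your pinning claim already fails for a single hyperbolic element.

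The failure is structural, not a repairable technical point: every element you use lies in the parabolic $\mathrm{SL}_{2}(\mathbb{Z})\ltimes \mathbb{Z}^{2}$, and that subgroup is \emph{virtually left-orderable in the relevant sense}. Take $F\leq \mathrm{SL}_{2}(\mathbb{Z})$ free of finite index (torsion-free finite-index subgroups of $\mathrm{SL}_{2}(\mathbb{Z})$ are free); then $F\ltimes \mathbb{Z}^{2}$ is an extension of the left-orderable group $F$ by the left-orderable group $\mathbb{Z}^{2}$, hence left-orderable (order lexicographically via the quotient, as above), and it contains the normal $\mathbb{Z}^{2}$ together with transverse hyperbolic elements. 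Since your argument uses nothing beyond this configuration plus a left order on the ambient group, if it were valid it would prove $F\ltimes \mathbb{Z}^{2}$ is not left-orderable --- a contradiction. This is exactly why the paper's proof (Witte's argument) runs on a different engine: the Heisenberg relation $[e_{12}(k),e_{23}(k)]=e_{13}(\pm k^{2})$ with $e_{13}$ central among the three, the combinatorial lemma that positive $a,b,c$ with $[a,b]=c^{\pm 1}$ satisfy $a\gg c$ or $b\gg c$ (exceeding every power of $c$), and then a cyclic chain $e_{13}\ll e_{12}\ll e_{32}\ll \cdots \ll e_{13}$ driven by relations such as $[e_{13},e_{32}]=e_{12}$. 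That chain necessarily uses both upper and lower unipotents, i.e. a family of root subgroups no parabolic contains; this cyclic combinatorics, not projective dynamics of Anosov elements on $\mathbb{RP}^{1}$, is where higher rank enters, so your description of the ``technical heart of Witte's argument'' is also inaccurate.
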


\begin{proof}
Without loss of generality, we assume $n=3.$ Let $e_{ij}(k)$ denote the
matrix with ones along diagonal, entry $k$ in the $(i,j)$-th position and
zeros elsewhere. Since $\Gamma $ is finite-index, for a sufficiently large $%
k $ we assume that $\Gamma $ contains all $e_{ij}(k).$

Suppose that $\Gamma $ has an order $<,$ which is preserved by left
multiplications. We choose $a=e_{12}(\pm k),b=e_{23}(\pm k)$ and $%
c=e_{13}(\pm k^{2})$ such that $a,b,c>1$. Note that $%
[a,b]=aba^{-1}b^{-1}=c^{-1}$ or $c.$

We claim that either $a>c^{s}$ or $b>c^{s}$ for any integer $s>0$ (denoted
as $a>>c$ or $b>>c$ in the following). To show this, suppose that $%
a<c^{m_{1}}$ and $a<c^{m_{2}}$ for some $m_{1},m_{2}>0.$ Hence $%
a<c^{k},b<c^{k}$ for $k=m_{1}m_{2}$ since $c$ is positive. Let 
\begin{equation*}
d_{m}=a^{m}b^{m}(a^{-1}c^{k})^{m}(b^{-1}c^{k})^{m}>1
\end{equation*}%
for $m>0.$ Note that $d_{m}=[a^{m},b^{m}]c^{2km}=c^{-m^{2}+2km}<1$ for
sufficiently large $m$ (assuming $[a,b]=c^{-1}.$ If $[a,b]=c,$ we may swich
the positions of $a$ and $b.$). This is a contradiction. We denote $e_{ij}$
the positive elements in $\{e_{ij}(\pm k)\}.$

We see that $e_{12}>>e_{13}$ or $e_{23}>>e_{13}.$ For the first case, note
that $[e_{13},e_{32}]=e_{12}$ and similarly we have $e_{32}>>e_{12}.$ We
thus get 
\begin{equation*}
e_{13}<<e_{12}<<e_{32}<<...<<e_{13},
\end{equation*}
a contradiction. The other case could be proved similarly.
\end{proof}

\bigskip

The previous two lemmas show that any action of a finite-index subgroup on
the real line $\mathbb{R}$ factors a finite group action.

\begin{theorem}
Any action of $\mathrm{SL}_{n}(\mathbb{Z})$ $(n\geq 3)$ by homeomorphisms on 
$S^{1}$ is trivial. In particular, Conjecture \ref{conj} holds for $r=1.$
\end{theorem}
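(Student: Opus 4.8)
The plan is to reduce the circle action to a line action and then invoke the orderability results and Margulis--Kazhdan theorem established above. First I would dispose of orientation: the orientation character $\mathrm{SL}_{n}(\mathbb{Z})\to \mathbb{Z}/2$, recording the effect of each homeomorphism on the orientation of $S^{1}$, is a homomorphism into an abelian group, and since $\mathrm{SL}_{n}(\mathbb{Z})$ $(n\geq 3)$ is perfect (equivalently $H_{1}(\mathrm{SL}_{n}(\mathbb{Z});\mathbb{Z})=0$), this character is trivial. Hence the given homomorphism $\rho$ lands in $\mathrm{Homeo}^{+}(S^{1})$.

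Next I would lift the action from the circle to the line through the universal central extension $1\to \mathbb{Z}\to \widetilde{\mathrm{Homeo}}^{+}(S^{1})\to \mathrm{Homeo}^{+}(S^{1})\to 1$, where $\widetilde{\mathrm{Homeo}}^{+}(S^{1})$ consists of the homeomorphisms of $\mathbb{R}$ commuting with the translation $x\mapsto x+1$; in particular $\widetilde{\mathrm{Homeo}}^{+}(S^{1})\subseteq \mathrm{Homeo}^{+}(\mathbb{R})$. Pulling this back along $\rho$ yields a central extension $1\to \mathbb{Z}\to \tilde{G}\to \mathrm{SL}_{n}(\mathbb{Z})\to 1$. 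Since $\mathbb{Z}$ is central, $\mathrm{SL}_{n}(\mathbb{Z})$ acts trivially on it, so Lemma \ref{split} applies as soon as $H^{2}(\mathrm{SL}_{n}(\mathbb{Z});\mathbb{Z})=0$. This vanishing is exactly where $n\geq 3$ is used a second time: by the universal coefficient theorem $H^{2}(\mathrm{SL}_{n}(\mathbb{Z});\mathbb{Z})$ is squeezed between $\mathrm{Ext}^{1}(H_{1},\mathbb{Z})$ and $\mathrm{Hom}(H_{2},\mathbb{Z})$, and for $n\geq 3$ one has $H_{1}=0$ (perfectness) while the Schur multiplier $H_{2}(\mathrm{SL}_{n}(\mathbb{Z});\mathbb{Z})$ is finite, so both outer terms vanish. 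A splitting of $\tilde{G}$ then provides a lift $\tilde{\rho}\colon \mathrm{SL}_{n}(\mathbb{Z})\to \mathrm{Homeo}^{+}(\mathbb{R})$ with $\rho$ recovered by projecting back to $S^{1}$.

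Finally I would apply the line theory to $\tilde{\rho}$. The kernel of $\tilde{\rho}$ is normal in $\mathrm{SL}_{n}(\mathbb{Z})$, so by Lemma \ref{normal} it is either central (hence finite) or of finite index. If it were central, the infinite effective quotient would be a countable group acting faithfully on $\mathbb{R}$ by orientation-preserving homeomorphisms, hence left-orderable by the earlier lemma; but then a torsion-free finite-index subgroup of $\mathrm{SL}_{n}(\mathbb{Z})$ meeting the finite kernel trivially would embed into it and inherit a left order, contradicting the non-left-orderability of finite-index subgroups. Thus the kernel has finite index and the image of $\tilde{\rho}$ is finite. Since an orientation-preserving homeomorphism of $\mathbb{R}$ of finite order must be the identity (monotonicity forces every point to be fixed), $\mathrm{Homeo}^{+}(\mathbb{R})$ is torsion-free, so this finite image is trivial; hence $\tilde{\rho}$, and therefore $\rho$, is trivial. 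The last assertion of the theorem follows because the only closed connected $1$-manifold is the circle.

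The hard part, I expect, is the middle step: recognizing that the obstruction to passing from $S^{1}$ to $\mathbb{R}$ is an Euler class lying in $H^{2}(\mathrm{SL}_{n}(\mathbb{Z});\mathbb{Z})$, and checking that this group vanishes. This is precisely where the rank hypothesis $n\geq 3$ is essential---through perfectness and the finiteness of the Schur multiplier---and it is what makes Lemma \ref{split} usable; the orderability input is then routine.
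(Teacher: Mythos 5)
Your proposal is correct and follows essentially the same route as the paper's proof: lift the circle action to $\mathbb{R}$ through the central extension by the deck group $\mathbb{Z}$, use perfectness of $\mathrm{SL}_{n}(\mathbb{Z})$ to make the extension central (resp.\ orientation-preserving), split it via $H^{2}(\mathrm{SL}_{n}(\mathbb{Z});\mathbb{Z})=0$ and Lemma \ref{split}, and then rule out the resulting action on the line using Lemma \ref{normal} together with the one-dimensional lemmas. The only differences are cosmetic: your pullback construction avoids the paper's case split on whether $\ker f$ is trivial, and your endgame uses non-left-orderability plus torsion-freeness of $\mathrm{Homeo}^{+}(\mathbb{R})$ where the paper simply observes that a finite cyclic group of order greater than $2$ cannot act effectively on $\mathbb{R}$.
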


\begin{proof}[Outline of the proof]
Let $f:\mathrm{SL}_{n}(\mathbb{Z})\rightarrow \mathrm{Homeo}(S^{1})$ be a
group homomorphism. Suppose that $\ker f=1.$ By lifting the group action to
the universal cover, we have an exact sequence%
\begin{equation*}
1\rightarrow \mathbb{Z}\rightarrow G^{\ast }\rightarrow \mathrm{SL}_{n}(%
\mathbb{Z})\rightarrow 1,
\end{equation*}%
where $G^{\ast }<\mathrm{Homeo}(\mathbb{R}).$ Note that $\mathrm{GL}_{1}(%
\mathbb{Z})=\mathbb{Z}/2$ and $\mathrm{SL}_{n}(\mathbb{Z})$ is the same as
its commutator subgroup when $n\geq 3.$ This means that $\mathrm{SL}_{n}(%
\mathbb{Z})$ acts trivially on the first term $\mathbb{Z}$. Since the second
cohomology group $H^{2}(\mathrm{SL}_{n}(\mathbb{Z});\mathbb{Z})=0,$ the
exact sequence is split by Lemma \ref{split}. Therefore, the group $\mathrm{%
SL}_{n}(\mathbb{Z})$ acts effectively on the universal cover $\mathbb{R}.$
This is impossible, since a finite cyclic group acting effectively on $%
\mathbb{R}$ is of order $2$. If $\ker f$ is not trivial, the image $\func{Im}%
f$ is finite by the congruence subgroup property (see Lemma \ref{normal}).
Now a similar argument proves that $\func{Im}f$ is trivial.
\end{proof}

\subsection{Group actions on surfaces}

The study of group actions on surfaces has a long history. The following is
well-known (see Edmonds \cite{ed}, pp340--341).

\begin{lemma}
Let $\Sigma _{g}$ be a closed orientable surface. A finite subgroup $G<%
\mathrm{Homeo}_{+}(\Sigma _{g})$ (orientation-preserving homeomorphisms) is
conjugate to a subgroup $G^{\prime }$ preserving a complex structure.
\end{lemma}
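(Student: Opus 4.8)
The plan is to reduce to the smooth category and then build the invariant complex structure by averaging. In the smooth setting the argument is short: if $G$ acted by orientation-preserving diffeomorphisms, I would pick any smooth Riemannian metric $m$ on $\Sigma_{g}$ and replace it by the averaged metric
\[
\bar{m}=\frac{1}{|G|}\sum_{\phi \in G}\phi ^{\ast }m,
\]
which is smooth, positive-definite, and manifestly $G$-invariant since $\psi ^{\ast }\bar{m}=\bar{m}$ for every $\psi \in G$. On an oriented surface a Riemannian metric determines a canonical conformal, hence complex, structure $J$ via isothermal coordinates: the transition maps between isothermal charts are orientation-preserving conformal maps, i.e.\ biholomorphisms. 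Because each $\phi \in G$ is an orientation-preserving isometry of $(\Sigma _{g},\bar{m})$, it is conformal and therefore holomorphic for $J$, so $G$ preserves $J$. Thus once the action is smooth the lemma is immediate, and the conjugating homeomorphism $G^{\prime }=\phi _{0}G\phi _{0}^{-1}$ is whatever homeomorphism $\phi _{0}$ is used to smooth the action.

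The real content, and the main obstacle, is therefore the claim that a finite group of orientation-preserving homeomorphisms of a surface is topologically conjugate to a group of diffeomorphisms. The decisive local input is a theorem of Ker\'ekj\'art\'o--Brouwer type: an orientation-preserving homeomorphism of finite order of a disk fixing its center is topologically conjugate to a rotation, and more generally a finite group acting on a disk by orientation-preserving homeomorphisms with a common fixed point is conjugate to a group of rotations. Consequently the $G$-action has only isolated fixed points, near each of which the (necessarily cyclic) stabilizer is modeled on a standard rotation group, while on the complement of the fixed set the action is free with a surface quotient. This local linearization is precisely the step where the topological hypothesis is upgraded to something usable, and it is the part I expect to be delicate.

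With the local normal forms in hand I would finish in either of two equivalent ways. One way is to patch the rotation models at the fixed points to the free quotient and conclude that the topological action is conjugate to a smooth one, after which the averaging argument of the first paragraph applies verbatim. The cleaner alternative is to present $\Sigma _{g}$ as a branched covering $\pi :\Sigma _{g}\rightarrow X=\Sigma _{g}/G$ whose branch points carry the standard $z\mapsto z^{k}$ behavior dictated by the cyclic stabilizers; I then equip the quotient surface $X$ with any complex structure and pull it back through $\pi $. On the free part the pullback is well defined because $\pi $ is an honest covering, and the local models $z\mapsto z^{k}$ fill in the branch points to give a genuine complex structure on $\Sigma _{g}$. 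This structure is $G$-invariant and $G$ acts holomorphically by construction, since deck-type transformations commuting with a holomorphic covering are holomorphic and the stabilizers act by the holomorphic rotations $z\mapsto e^{2\pi i/k}z$. In both routes everything downstream of the fixed-point analysis is the classical passage from a metric, or a quotient complex structure, to a complex structure on $\Sigma _{g}$; the only genuinely topological difficulty is the linearization at the fixed points.
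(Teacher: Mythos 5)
The paper offers no proof of this lemma at all: it is quoted as a classical fact with a pointer to Edmonds \cite{ed}, pp.~340--341. So your attempt can only be measured against the standard argument in the cited literature, and that is essentially what you have reconstructed. Your second route is the classical one: the singular set is finite, the stabilizers are cyclic, $\Sigma _{g}\rightarrow \Sigma _{g}/G$ is a branched covering, one pulls back a complex structure from the quotient surface over the complement of the branch locus and fills in the punctures using the $z\mapsto z^{k}$ models. Note that this route in fact produces a $G$-invariant complex structure on $\Sigma _{g}$ itself, so the conjugating homeomorphism in the statement is not even needed; your first (smoothing-then-averaging) route is also viable but is really the same argument with an extra layer, since the only way you produce the smooth model is through the same branched-covering data.

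Two points in your outline need to be tightened. First, the finiteness of the fixed-point sets is not a consequence of the Ker\'{e}kj\'{a}rt\'{o} disk theorem, as your ``consequently'' suggests: to invoke that theorem at a fixed point you must first manufacture an invariant disk, and to exclude a nontrivial element that is the identity on an open set you need Newman's theorem. The efficient input is Smith theory (see Bredon \cite{Bre}): the fixed set of a nontrivial orientation-preserving finite-order homeomorphism of a surface is $0$-dimensional, hence finite. Once this is known, the cyclicity of stabilizers and the local models follow from covering-space theory of the punctured disk (the quotient of a punctured invariant neighborhood is an open annulus with $\pi _{1}\cong \mathbb{Z}$, forcing the stabilizer to be a finite cyclic quotient of $\mathbb{Z}$), with no need for a group version of the disk theorem. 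Second, when you ``fill in'' a branch point you must verify that the punctured neighborhood upstairs, with the pulled-back structure, is conformally a punctured disk rather than an annulus of positive inner radius; this is exactly what the cyclic covering structure supplies (a degree-$k$ cyclic cover of a conformal punctured disk is again a conformal punctured disk, with deck group generated by $z\mapsto e^{2\pi i/k}z$), after which removable-singularity arguments extend both the complex structure and the $G$-action across the puncture. With these repairs your outline is a correct and complete proof, in the same spirit as the classical treatment the paper cites.
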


By Greenberg \cite{1974}, every finite group $G$ is a subgroup of $\mathrm{%
Homeo}(\Sigma _{g})$ for some surface $\Sigma _{g}.$

\begin{example}
A finite subgroup $G<\mathrm{Homeo}(S^{2})$ is a subgroup of $O(3)$ (cf. 
\cite{12}\cite{ed}). The set of all finite subgroups of $\mathrm{Homeo}%
(T^{2})$ can also be classified. For higher-genus surface $\Sigma _{g},g>1,$
a finite subgroup $G<\mathrm{Homeo}(\Sigma _{g})$ is a quotient group of a
non-Euclidean crystallographic group by the surface group $\pi _{1}(\Sigma
_{g})$ (cf. \cite{ggz}).
\end{example}

\begin{lemma}
Suppose that Conjecture \ref{1.2} holds for $r=2.$ Any group action of a
torsion-free finite-index subgroup $\Gamma $ of $\mathrm{SL}_{n}(\mathbb{Z})$
on a surface $\Sigma _{g}$ factors through a finite group.
\end{lemma}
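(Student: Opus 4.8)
The plan is to reduce the statement to Conjecture \ref{1.2} on the plane $\mathbb{R}^2$ by passing to the universal cover of $\Sigma_g$, and to control the resulting group extension using Lemma \ref{split}. Write $f:\Gamma \to \mathrm{Homeo}(\Sigma_g)$ for the action; it suffices to prove that $f$ has finite image. I first treat the generic case $g\geq 1$, where the universal cover $\tilde\Sigma_g$ is homeomorphic to $\mathbb{R}^2$.

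First I would record the outer action on the fundamental group. Since a homeomorphism of $\Sigma_g$ is determined up to isotopy by its action on $\pi_1(\Sigma_g)$, composing $f$ with $\mathrm{Homeo}(\Sigma_g)\to \mathrm{Out}(\pi_1(\Sigma_g))\cong \mathrm{MCG}^{\pm}(\Sigma_g)$ gives a homomorphism $\rho:\Gamma\to \mathrm{MCG}^{\pm}(\Sigma_g)$. A higher-rank lattice such as the torsion-free finite-index subgroup $\Gamma<\mathrm{SL}_n(\mathbb{Z})$ has finite image in any mapping class group (Farb--Masur); hence $\Gamma':=\ker\rho$ is a finite-index subgroup, and it is enough to show $f|_{\Gamma'}$ has finite image. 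By construction $\Gamma'$ acts trivially on $\pi_1(\Sigma_g)$.

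Next I would lift the action to $\tilde\Sigma_g\cong\mathbb{R}^2$. Because $\mathbb{R}^2$ is simply connected, every $f(\gamma)$ lifts through the covering $p:\mathbb{R}^2\to\Sigma_g$, and the group $G^{\ast}$ of all such lifts sits in a short exact sequence
\begin{equation*}
1\rightarrow \pi_1(\Sigma_g)\rightarrow G^{\ast}\rightarrow \Gamma'\rightarrow 1,\qquad G^{\ast}<\mathrm{Homeo}(\mathbb{R}^2).
\end{equation*}
Since $\Gamma'$ acts trivially on $\pi_1(\Sigma_g)$, Lemma \ref{split} applies once $H^2(\Gamma';Z(\pi_1(\Sigma_g)))=0$. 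For $g\geq 2$ the surface group is centerless, so $Z(\pi_1(\Sigma_g))=1$ and the hypothesis holds automatically; for $g=1$ one has $Z(\pi_1(T^2))=\mathbb{Z}^2$ and one must verify $H^2(\Gamma';\mathbb{Z}^2)=0$ (equivalently $H^2(\Gamma';\mathbb{Z})=0$), which is the one genuinely cohomological input. A splitting $s:\Gamma'\to G^{\ast}<\mathrm{Homeo}(\mathbb{R}^2)$ then produces an honest action of $\Gamma'$ on $\mathbb{R}^2$ lifting $f|_{\Gamma'}$, in the sense that $p\circ s(\gamma)=f(\gamma)\circ p$.

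Now the hypothesis takes over: as $2<n$ and $\Gamma'$ is torsion-free and finite-index, Conjecture \ref{1.2} for $r=2$ forces $s(\Gamma')$ to be finite, that is, $\ker s$ has finite index in $\Gamma'$. The relation $p\circ s(\gamma)=f(\gamma)\circ p$ shows $\ker s\subseteq \ker(f|_{\Gamma'})$, so $f(\Gamma')$, and therefore $f(\Gamma)$, is finite, as desired. I expect two points to carry the real difficulty. The first is the reduction to a trivial outer action: without the finiteness of $\rho$ one cannot split the extension, so the Farb--Masur superrigidity for mapping class groups is doing essential work. The second is the sphere case $g=0$: there $\tilde\Sigma_g=S^2$ is not $\mathbb{R}^2$, the lifting trick is unavailable, and one must instead manufacture a global $\Gamma$-fixed point (after passing to the orientation-preserving, finite-index subgroup) in order to pass to $S^2\setminus\{\mathrm{pt}\}\cong\mathbb{R}^2$ and invoke Conjecture \ref{1.2}; producing such a common fixed point for the whole group is the main obstacle in that case.
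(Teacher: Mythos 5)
Your proposal follows, in its core case $g\geq 2$, essentially the same route as the paper's proof: use Farb--Masur to kill the image in the mapping class group, pass to the finite-index subgroup $\Gamma'$ with trivial outer action, lift to the universal cover, split the resulting extension by Lemma \ref{split} (trivial outer action plus centerlessness of $\pi_1(\Sigma_g)$), and feed the resulting action on the plane into Conjecture \ref{1.2}. The one structural difference is the endgame. The paper first disposes of the case $\ker f\neq 1$ by the Margulis normal subgroup theorem (Lemma \ref{normal}), then assumes $f$ injective and derives a contradiction; you instead conclude directly that $f(\Gamma')$ is finite from $\ker s\subseteq\ker (f|_{\Gamma'})$, never invoking Lemma \ref{normal}. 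That variant is legitimate and arguably cleaner, but as written it is inconsistent with your own setup: if $G^{\ast}$ is literally the group of all lifts, its natural quotient is $f(\Gamma')$, not $\Gamma'$, so your exact sequence only makes sense when $f|_{\Gamma'}$ is injective --- and in that case the section $s$, being injective into $G^{\ast}<\mathrm{Homeo}(\mathbb{R}^2)$, cannot have finite image at all, so Conjecture \ref{1.2} yields an outright contradiction (this is exactly the paper's argument), not a statement about $\ker s$. To run your direct version honestly, replace $G^{\ast}$ by the fiber product $\{(\tilde h,\gamma)\colon \tilde h \mbox{ lifts } f(\gamma)\}$, split that extension, and compose the section with the projection to $\mathrm{Homeo}(\mathbb{R}^2)$; then $s$ need not be injective and your final paragraph goes through. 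Otherwise, keep the paper's preliminary case split.

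On low genus: you are right to flag that this argument only treats $g\geq 2$ (the paper silently assumes this, via $\mathbb{H}^2$ and centerlessness), but your proposed repair at $g=1$ cannot be carried out. The vanishing $H^2(\Gamma';\mathbb{Z})=0$ fails for finite-index subgroups of $\mathrm{SL}_n(\mathbb{Z})$ in general: a congruence subgroup $\Gamma(p)$, $p\geq 3$, is torsion-free and has finite but nontrivial abelianization (it surjects onto $\Gamma(p)/\Gamma(p^2)\cong\mathfrak{sl}_n(\mathbb{F}_p)$), so by universal coefficients $H^2(\Gamma(p);\mathbb{Z})\supseteq \mathrm{Ext}^1_{\mathbb{Z}}(H_1(\Gamma(p));\mathbb{Z})\neq 0$. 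So for the torus one needs a genuinely different argument (e.g.\ along the lines of Weinberger's theorem cited in the paper), not a cohomology vanishing; the sphere case you correctly identify as open under this method. Since the paper's own proof has the same blind spots at $g=0,1$, this does not put you behind it, but the step you call ``the one genuinely cohomological input'' is not merely unverified --- it is false for the natural choices of $\Gamma'$.
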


\begin{proof}
Let $f:\Gamma \rightarrow \mathrm{Homeo}(\Sigma _{g})$ be a group
homomorphism. If $\ker f$ is not trivial, the image $\func{Im}f$ is finite
by the congruence subgroup property (see Lemma \ref{normal}). Suppose that $%
\ker f=1.$ Let $\mathrm{Homeo}_{0}(\Sigma _{g})$ be the identity component
of $\mathrm{Homeo}(\Sigma _{g})$ and $\mathrm{Mod}(\Sigma _{g})=\mathrm{Homeo%
}(\Sigma _{g})/\mathrm{Homeo}_{0}(\Sigma _{g})$ be the mapping class group.
Since the composite $\Gamma \rightarrow \mathrm{Homeo}(\Sigma
_{g})\rightarrow \mathrm{Mod}(\Sigma _{g})$ has finite image (by Farb-Masur 
\cite{fs2}, Theorem 1.1), there is a finite-index subgroup $\Gamma ^{\prime
} $ of $\Gamma $ such that $f(\Gamma ^{\prime })$ lies in $\mathrm{Homeo}%
_{0}(\Sigma _{g})$ the identity component of $\mathrm{Homeo}(\Sigma _{g}).$
We thus have a lifting%
\begin{equation*}
1\rightarrow \pi _{1}(\Sigma _{g})\rightarrow G^{\ast }\rightarrow \Gamma
^{\prime }\rightarrow 1,
\end{equation*}%
for some group $G^{\ast }<\mathrm{Homeo}(\mathbb{H}^{2}),$ where $\mathbb{H}%
^{2}$ is the universal cover of $\Sigma _{g}.$ Note that $\Gamma ^{\prime }$
acts trivially on $\pi _{1}(\Sigma _{g})$ and $\pi _{1}(\Sigma _{g})$ is
centerless. Thus the second cohomology group $H^{2}(\Gamma ^{\prime };Z(\pi
_{1}(\Sigma _{g})))=0$ and the above exact sequence is split by Lemma \ref%
{split}. Therefore, the group $\Gamma ^{\prime }$ could be lifted to be a
subgroup of $G^{\ast }$ and thus acts on $\mathbb{H}^{2}.$ However, the
confirmation of Conjecture \ref{1.2} for $r=2$ implies the group action of $%
\Gamma ^{\prime }$ on $\mathbb{H}^{2}$ (which is homeomorphic to the plane $%
\mathbb{R}^{2}$) is finite. This is a contradiction.
\end{proof}

\subsection{Group actions on spheres}

In the study of group actions on high dimensional manifolds, there are
several interesting phenomena. One of them is that the fixed point set of a
finite group $G$ acting on a topological manifold $M$ is not necessary a
topological manifold. Another one is that a cyclic group $\mathbb{Z}/pq$ ($%
p,q$ are two different primes) could act without fixed points on some
Euclidean space $\mathbb{R}^{n}$ (cf. Bredon \cite{Bre}, Section I.8, p.
55). A good way to deal with these difficulties is to work in the category
of (co)homology manifolds. Roughly speaking, a cohomology $n$-manifold mod $p
$ is a locally compact Hausdorff space that has a local cohomology structure
(with coefficient group $\mathbb{Z}/p$) resembling that of Euclidean $n$%
-space. Let $L=\mathbb{Z}$ or $\mathbb{Z}/p.$ All homology groups in this
section are Borel-Moore homology groups with compact supports and
coefficients in a sheaf $\mathcal{A}$ of modules over $L$. The homology
groups of $X$ are denoted by $H_{\ast }^{c}(X;\mathcal{A})$ and the
Alexander-Spanier cohomology groups (with coefficients in $L$ and compact
supports) are denoted by $H_{c}^{\ast }(X;L).$ We define the cohomology
dimension $\dim _{L}X=\min \{n\mid H_{c}^{n+1}(U;L)=0$ for all open $%
U\subset X\}.$ If $L=\mathbb{Z}/p,$ we write $\dim _{p}X$ for $\dim _{L}X.$
For integer $k\geq 0,$ let $\mathcal{O}_{k}$ denote the sheaf associated to
the pre-sheaf $U\longmapsto H_{k}^{c}(X,X\backslash U;L).$ An $n$%
-dimensional homology manifold over $L$ (denoted $n$-hm$_{L}$) is a locally
compact Hausdorff space $X$ with $\dim _{L}X<+\infty $, and $\mathcal{O}%
_{k}(X;L)=0$ for $p\neq n$ and $\mathcal{O}_{n}(X;L)$ is locally constant
with stalks isomorphic to $L$. The sheaf $\mathcal{O}_{n}$ is called the
orientation sheaf. There is a similar notion of cohomology manifold over $L$%
, denoted $n$-cm$_{L}$ (cf. \cite{Bo}, p.373). Topological manifolds are
(co)homology manifolds over $L.$

The following is called the Smith theory, which is a combination of
Corollary 19.8 and Corollary 19.9 (page 144) in \cite{Bo} (see also Theorem
4.5 in \cite{bv}).

\begin{lemma}
\label{locsmith}Let $p$ be a prime and $X$ be a locally compact Hausdorff
space of finite dimension over $\mathbb{Z}_{p}.$ Suppose that $\mathbb{Z}%
_{p} $ acts on $X$ with fixed-point set $F.$

\begin{enumerate}
\item[(i)] If $H_{\ast }^{c}(X;\mathbb{Z}_{p})\cong H_{\ast }^{c}(S^{m};%
\mathbb{Z}_{p}),$ then $H_{\ast }^{c}(F;\mathbb{Z}_{p})\cong H_{\ast
}^{c}(S^{r};\mathbb{Z}_{p})$ for some $r$ with $-1\leq r\leq m.$ If $p$ is
odd, then $r-m$ is even.

\item[(ii)] If $X$ is $\mathbb{Z}_{p}$-acyclic, then $F$ is $\mathbb{Z}_{p}$%
-acyclic (in particular non-empty and connected).
\end{enumerate}
\end{lemma}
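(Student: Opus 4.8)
The plan is to run classical Smith theory with $\mathbb{Z}_p$-coefficients, which is exactly the content packaged in \cite{Bo}. Write $G=\mathbb{Z}_p=\langle t\rangle$ and work throughout with Alexander--Spanier cohomology with compact supports, $H_c^*(-;\mathbb{Z}_p)$; since $F$ is a closed $G$-invariant subspace, all the relevant pairs are well behaved. The algebraic engine is the structure of the group ring $\mathbb{Z}_p[G]$: setting $\tau=t-1$ and the norm $\sigma=1+t+\cdots+t^{p-1}$, one has $\sigma\tau=\tau\sigma=0$ together with the key identity $\tau^{\,p-1}=\sigma$. For each $\rho$ in the chain $\{\tau,\tau^{2},\dots,\tau^{p-1}=\sigma\}$, with complementary factor $\bar\rho$ (so that $\rho\bar\rho$ spans the norm ideal), I would introduce the Smith special cochain complexes and their cohomology groups $H_\rho^*(X)$; these are finite-dimensional over $\mathbb{Z}_p$ by the finite-dimensionality hypothesis on $X$.

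First I would establish the fundamental Smith--Gysin exact sequences interlocking the cohomology of $X$, that of $F$, and the special groups. Schematically they take the form
\[
\cdots\to H_\rho^{n}(X)\to H_c^{n}(X)\to H_{\bar\rho}^{n}(X)\oplus H_c^{n}(F)\to H_\rho^{n+1}(X)\to\cdots,
\]
together with the companion sequence obtained by interchanging $\rho$ and $\bar\rho$; the summand $H_c^{n}(F)$ enters because $\tau$ acts as zero on the fixed set. Counting dimensions along these sequences yields the Smith inequality $\sum_n\dim H_c^{n}(F)\le\sum_n\dim H_c^{n}(X)$, and a structural analysis of the same sequences shows that the special cohomology of a cohomology sphere is again the cohomology of a sphere, which is what upgrades the inequality to a sphere conclusion.

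For part (i), apply the sequences with $\rho=\tau$ and $\bar\rho=\sigma$. Since $H_c^*(X)\cong H_c^*(S^m;\mathbb{Z}_p)$, the structural analysis above forces $H_c^*(F)\cong H_c^*(S^r;\mathbb{Z}_p)$ for some $r$, with the empty set occurring as the degenerate case $r=-1$; the Smith inequality together with the placement of the top-dimensional class in the exact sequence gives the bound $r\le m$. Part (ii) is the same argument with the sphere replaced by a point: if $X$ is $\mathbb{Z}_p$-acyclic, then $H_c^*(X)$ has total dimension $1$, so the Smith inequality gives $\dim H_c^*(F)\le 1$, and tracking $H^0$ (equivalently, using the impossibility of a free $\mathbb{Z}_p$-action on a finite-dimensional $\mathbb{Z}_p$-acyclic space) shows this dimension is exactly $1$ and concentrated in degree $0$; hence $F$ is $\mathbb{Z}_p$-acyclic, in particular nonempty and connected.

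The subtle point, and the main obstacle, is the parity statement $r\equiv m\pmod 2$ for odd $p$. For this I would pass to the Borel construction $X_G=EG\times_G X$ and regard $H_G^*(X):=H^*(X_G;\mathbb{Z}_p)$ as a module over $H^*(BG;\mathbb{Z}_p)=\Lambda(a)\otimes\mathbb{Z}_p[b]$ with $\deg a=1$ and $\deg b=2$. The localization theorem (inverting $b$) gives an isomorphism of localized modules $H_G^*(X)[b^{-1}]\cong H^*(F)\otimes H^*(BG)[b^{-1}]$, the right-hand side using that $G$ acts trivially on $F$. Computing the left-hand side from the Serre spectral sequence of the fibration $X\to X_G\to BG$, in which $H_c^*(X)\cong H_c^*(S^m;\mathbb{Z}_p)$ and $G$ acts trivially on the top class, makes it free of rank $2$ on generators in degrees $0$ and $m$, while the right-hand side is free of rank $2$ on generators in degrees $0$ and $r$. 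Since the invertible element $b$ has degree $2$, matching the generator degrees of two isomorphic graded localized modules forces $m\equiv r\pmod 2$. This is precisely where odd $p$ is used: for $p=2$ one has $H^*(B\mathbb{Z}_2;\mathbb{Z}_2)=\mathbb{Z}_2[w]$ with $\deg w=1$, the periodicity is by degree $1$, and no parity constraint survives. Making the localization theorem rigorous in the locally compact, finite-cohomological-dimension setting is the technical heart; it is exactly what is assembled in Corollaries 19.8 and 19.9 of \cite{Bo}, which I would invoke rather than redo the spectral-sequence bookkeeping by hand.
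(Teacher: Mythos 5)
The first thing to note is that the paper contains no proof of this lemma at all: it is presented as ``a combination of Corollary 19.8 and Corollary 19.9 (page 144)'' of \cite{Bo}, with a pointer to Theorem 4.5 of \cite{bv}. So there is no in-paper argument for your proposal to match; what you have done is reconstruct the classical Smith theory that those references package. For the main assertions the reconstruction is correct and standard: the special complexes attached to $\rho \in \{\tau ,\tau ^{2},\dots ,\tau ^{p-1}=\sigma \}$, the interlocking sequences
\[
\cdots \to H_{\rho }^{n}(X)\to H_{c}^{n}(X)\to H_{\bar{\rho}}^{n}(X)\oplus H_{c}^{n}(F)\to H_{\rho }^{n+1}(X)\to \cdots ,
\]
the resulting inequality $\sum_{n}\dim H_{c}^{n}(F)\leq \sum_{n}\dim H_{c}^{n}(X)$, and the congruence $\chi (F)\equiv \chi (X)\pmod{p}$ together give part (ii) and the sphere statement of part (i), with $F=\emptyset $ entering as $r=-1$. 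This is exactly the content of the cited corollaries, so up to this point your route and the paper's citation coincide in substance.

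The soft spot is your localization proof of the parity statement, which you yourself flag as the main obstacle. The claim that the Serre spectral sequence of $X\to X_{G}\to BG$ makes $H_{G}^{\ast }(X)[b^{-1}]$ free of rank $2$ on generators in degrees $0$ and $m$ is not automatic: it presupposes degeneration, and degeneration fails precisely in the case you cannot ignore, namely free actions. For the standard free $\mathbb{Z}_{p}$-action on $S^{2k-1}$ one has $H_{G}^{\ast }(X)\cong H^{\ast }(L^{2k-1};\mathbb{Z}_{p})$, a finite-dimensional module on which $b$ acts nilpotently, so the localization is $0$, not free of rank $2$. A complete argument must therefore split into cases: if $F\neq \emptyset $, the localization theorem plus a rank count forces degeneration, and then your degree-matching does work --- but it should be justified by observing that the homogeneous units of $\Lambda (a)\otimes \mathbb{Z}_{p}[b,b^{-1}]$ all sit in even degrees (odd-degree elements are multiples of $a$, hence square-zero), so the degree of a free generator is well defined modulo $2$; if $F=\emptyset $, localization only tells you that $d_{m+1}$ is nonzero on the top class, and you still must rule out even $m$, which needs a separate input such as $\chi (F)\equiv \chi (X)\pmod{p}$ (for $p$ odd and $m$ even this gives $\chi (F)\equiv 2\not\equiv 0$, forcing $F\neq \emptyset $). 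With these repairs your localization route is a legitimate alternative to the purely Smith-theoretic derivation of the parity in the classical texts; since you explicitly defer the technical heart to the same source the paper cites, the proposal is acceptable as a sketch, but as a standalone proof the parity step as written contains a genuine gap.
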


\begin{lemma}
(\cite{bv}, Theorem 4.7) If $m<d-1$, the group $(\mathbb{Z}/2)^{d}$ cannot
act effectively on a generalized $m$-sphere over $\mathbb{Z}/2$ or a $%
\mathbb{Z}/2$-acyclic $(m+1)$-dimensional homology manifold over $\mathbb{Z}%
/2$.

If $m<2d-1$ and $p$ is odd, then $(\mathbb{Z}/p)^{d}$ cannot act effectively
a generalized $m$-sphere or a $\mathbb{Z}/p$-acyclic $(m+1)$-dimensional
homology manifold over $\mathbb{Z}/p$.
\end{lemma}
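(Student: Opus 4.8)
The plan is to induct on the rank $d$, using the local Smith theory of Lemma \ref{locsmith} to descend from a space to the fixed-point set of a cyclic subgroup $\mathbb{Z}/p$ and to control the drop in cohomological dimension at each descent. Throughout write $E=(\mathbb{Z}/p)^{d}$ and $F=X^{E}$. It is cleaner to prove the sharper statement that an effective action on a generalized $m$-sphere $X$ forces $m-\dim _{\mathbb{Z}/p}X^{E}\ge d$ when $p=2$ and $\ge 2d$ when $p$ is odd; since $\dim _{\mathbb{Z}/p}F\ge -1$ this yields $m\ge d-1$ (resp. $m\ge 2d-1$), which is the contrapositive of the assertion. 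The $\mathbb{Z}/p$-acyclic case runs in exact parallel: by Lemma \ref{locsmith}(ii) every successive fixed set stays $\mathbb{Z}/p$-acyclic, hence nonempty with dimension $\ge 0$, and one instead tracks the dimension of the ambient homology manifold as it falls from $m+1$ to $\dim X^{E}\ge 0$.

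For the base case $d=1$ I would first record that a \emph{nontrivial} $\mathbb{Z}/p$-action on a generalized $m$-sphere has fixed set of strictly smaller dimension: the fixed set of a $\mathbb{Z}/p$-action on a $\mathbb{Z}/p$-homology manifold is again such a manifold, so a full-dimensional fixed set would be open as well as closed, forcing it to be all of $X$ by connectedness and hence the action to be trivial. Combined with the parity clause of Lemma \ref{locsmith}(i), this gives a drop of at least $1$ for $p=2$ and, by even codimension, at least $2$ for $p$ odd, which is the base case.

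For the inductive step, choose a corank-one subgroup $H\cong (\mathbb{Z}/p)^{d-1}$; as a subgroup of an effective action it is itself effective, so induction gives $m-\dim X^{H}\ge d-1$ (resp. $2(d-1)$). The quotient $E/H\cong \mathbb{Z}/p$ then acts on the generalized sphere $X^{H}$ with fixed set $F$, and if this action is nontrivial the base-case drop applies once more, adding $1$ (resp. $2$) and completing the induction. The $\mathbb{Z}/p$-acyclic case is identical, with $X^{H}$ a nonempty acyclic homology manifold at each stage.

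The main obstacle is the degenerate possibility that for \emph{every} corank-one $H$ the quotient $E/H$ acts trivially on $X^{H}$, i.e. $X^{H}=F$ for all such $H$; then the single descent loses precisely the last unit of the estimate, and I expect this to be the crux. The natural way to exclude it is Borel's formula, the Smith-theoretic identity $\dim X-\dim F=\sum _{H}\bigl(\dim X^{H}-\dim F\bigr)$ with $H$ ranging over the $(p^{d}-1)/(p-1)$ corank-one subgroups: if every summand vanished we would get $\dim X=\dim F$, contradicting the base-case fact that the nontrivial group $E$ cannot fix a full-dimensional subsphere, so some $H$ must fall into the nondegenerate case above. Since only the local Smith theory of Lemma \ref{locsmith} is assumed here, I would either cite Borel's formula as a standard consequence of that theory or, what already suffices, prove only its rank-two instance by applying Lemma \ref{locsmith} twice in the two possible orders to an intermediate corank-two subgroup; the same device closes the $\mathbb{Z}/p$-acyclic case.
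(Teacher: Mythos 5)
Your main line of argument is essentially the proof of the cited source: the survey states this lemma without proof, quoting Bridson--Vogtmann \cite{bv}, Theorem 4.7, and the proof there is exactly your induction --- the sharper codimension estimate ($\geq d$ for $p=2$, $\geq 2d$ for odd $p$), the Smith-theoretic base case, descent through a corank-one subgroup, and Borel's formula to rule out the degenerate case in which every corank-one fixed set coincides with $X^{E}$. With Borel's formula cited (from \cite{Bo}, Ch.\ XIII, or \cite{bv}, Theorem 4.6), your argument is complete and correct.

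One of your two suggested ways of securing that ingredient, however, is not viable, and the distinction matters. Borel's formula is \emph{not} a formal consequence of the local Smith theory in Lemma \ref{locsmith}, and in particular its rank-two instance cannot be obtained ``by applying Lemma \ref{locsmith} twice in the two possible orders.'' The degenerate case you must exclude contains, when $d=2$ and $F=\emptyset$, precisely the classical theorem of Smith that $(\mathbb{Z}/p)^{2}$ admits no free action on a finite-dimensional mod-$p$ homology sphere. Lemma \ref{locsmith} is perfectly consistent with a cyclic group of order $p$ acting with empty fixed set (antipodal maps, lens spaces), so no finite iteration of it, in any order, can detect the contradiction; Smith's free-action theorem and Borel's formula genuinely require further input (the Borel construction and its spectral sequence, or periodicity of group cohomology). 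So take your first alternative and cite the formula rather than attempt to rederive it. A more minor point of the same kind: your base case uses that the fixed set of $\mathbb{Z}/p$ acting on a homology manifold over $\mathbb{Z}/p$ is again such a manifold (of even codimension when $p$ is odd) together with invariance of domain for these objects; both are standard and in \cite{Bo}, but they also go beyond the statement of Lemma \ref{locsmith} and should be cited explicitly.
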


\begin{theorem}
(\cite{bv}, Theorem 1.1) Any action of $\mathrm{SL}_{n}(\mathbb{Z})$ $(n\geq
3)$ on the sphere $S^{k}$ $(k<n-1)$ by homeomorphisms is trivial.
\end{theorem}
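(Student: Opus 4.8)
The plan is to prove the theorem by combining the congruence subgroup property (Lemma \ref{normal}) with the Smith-theoretic restrictions on elementary abelian $p$-group actions established in the two preceding lemmas. Let me sketch the structure before filling in details.

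First I would set up the dichotomy. Let $f:\mathrm{SL}_{n}(\mathbb{Z})\rightarrow \mathrm{Homeo}(S^{k})$ be a group homomorphism. By Lemma \ref{normal}, the kernel $\ker f$ is either central (hence finite) or of finite index. In the latter case the image is finite and the argument must still rule it out; so the real content is to show that the image cannot be nontrivial at all. I would therefore argue that if $f$ is nontrivial then, passing to a suitable finite-index or finite subquotient, one produces a faithful action of an elementary abelian $p$-group of large rank on $S^{k}$, contradicting the rank bounds.

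The main engine is the existence of large elementary abelian subgroups in $\mathrm{SL}_{n}(\mathbb{Z})$ coming from torsion. The key step is to locate, for a suitable prime $p$, a subgroup isomorphic to $(\mathbb{Z}/p)^{d}$ inside $\mathrm{SL}_{n}(\mathbb{Z})$ with $d$ large relative to $n$ — concretely, diagonal sign matrices give $(\mathbb{Z}/2)^{n-1}$, and permutation-type or companion-matrix constructions give $p$-torsion of comparable rank. Since $(\mathbb{Z}/p)^{d}$ is finite and the sphere $S^{k}$ is a generalized $m$-sphere over $\mathbb{Z}/p$ with $m=k$, the rank restriction from the lemma (\cite{bv}, Theorem 4.7) says that an effective action forces $k\geq d-1$ (for $p=2$) or $k\geq 2d-1$ (for $p$ odd). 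Choosing $d$ so that this violates the hypothesis $k<n-1$ yields that the restriction of $f$ to $(\mathbb{Z}/p)^{d}$ is \emph{not} effective, i.e.\ has nontrivial kernel. The hard part will be promoting this ``some torsion element acts trivially'' into ``$f$ is trivial'': the nontrivial kernel inside $(\mathbb{Z}/p)^{d}$ generates, under conjugation by the full group, a nontrivial normal subgroup of $\mathrm{SL}_{n}(\mathbb{Z})$, and by Lemma \ref{normal} this normal closure is either central or finite-index; I would show it must be all of $\mathrm{SL}_{n}(\mathbb{Z})$ (using that nontrivial normal subgroups are finite-index and that the center acts trivially because $\mathrm{SL}_{n}(\mathbb{Z})$ is perfect for $n\geq 3$), forcing $\ker f=\mathrm{SL}_{n}(\mathbb{Z})$ and hence $f$ trivial.

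I expect the principal obstacle to be the bookkeeping in the last step: ensuring that the element acting trivially is non-central (so that its normal closure is genuinely large), and handling the finite-image case where $f$ factors through some $\mathrm{SL}_{n}(\mathbb{Z}/a)$. For the finite-image case I would argue that $\mathrm{SL}_{n}(\mathbb{Z}/a)$ still contains elementary abelian $p$-subgroups of rank growing with $n$ (for primes $p\mid a$, or after enlarging $a$), so the same Smith-theory bound applies and forces the relevant torsion to act trivially; combined with perfectness and Lemma \ref{normal} this again collapses the image to a point. Throughout, the essential input is that the torsion-free rank of abelian $p$-subgroups of $\mathrm{SL}_{n}$ grows like $n$ while the sphere $S^{k}$ with $k<n-1$ is too small to admit such actions — the inequality $k<n-1$ is exactly calibrated against the $(\mathbb{Z}/2)^{n-1}$ of diagonal sign matrices.
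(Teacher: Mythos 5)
Your strategy is the same as the paper's (torsion subgroups plus the Smith-theoretic rank bounds, then Lemma \ref{normal} to convert a trivially-acting non-central element into triviality of $f$), but there is a genuine off-by-one gap at the boundary dimension $k=n-2$, and your closing sentence pinpoints it: the inequality $k<n-1$ is \emph{not} calibrated against the $(\mathbb{Z}/2)^{n-1}$ of diagonal sign matrices. For $p=2$ the quoted lemma requires $m<d-1$, so with $d=n-1$ it only forbids effective actions on $S^{k}$ for $k<n-2$; at $k=n-2$, the largest dimension permitted by the hypothesis, it says nothing. For odd $p$ the bound $m<2d-1$ is better per unit of rank, but the maximal rank of an elementary abelian $p$-subgroup of $\mathrm{SL}_{n}(\mathbb{Z})$ is roughly $[n/(p-1)]$; the optimum is $p=3$ with $d=[n/2]$ (block-diagonal copies of the order-$3$ matrix $\begin{pmatrix} 0 & -1 \\ 1 & -1 \end{pmatrix}$, which is what the paper uses), giving non-effectiveness for $k<2[n/2]-1$. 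This equals $n-1$ exactly when $n$ is even --- so your plan does close for even $n$ --- but equals $n-2$ when $n$ is odd, so for $n$ odd and $k=n-2$ \emph{no} choice of prime and elementary abelian subgroup makes the lemma, as stated, apply.

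The missing ingredient, which is how the paper (following \cite{bv}) fills exactly this hole, is orientation: since $\mathrm{SL}_{n}(\mathbb{Z})$ is perfect for $n\geq 3$, any action on $S^{k}$ is by orientation-preserving homeomorphisms, and for such actions the $p=2$ Smith bound improves by one (\cite{bv}, Proposition 4.13), so $(\mathbb{Z}/2)^{n-1}$ cannot act effectively whenever $k<n-1$; this is the "additional argument" the paper invokes for $n$ odd. You use perfectness only in the normal-closure bookkeeping, never in the Smith-theory step, so as written your argument proves the theorem only for $k<n-2$ (all $n$) or for all $k<n-1$ with $n$ even. A secondary, fixable wrinkle: with $p=2$ the trivially-acting element you extract could be the central matrix $-I$ (which lies in the diagonal sign group when $n$ is even), in which case its normal closure is not large; the paper's choice of $p=3$ sidesteps this automatically, since an order-$3$ element can never be $\pm I$. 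The rest of your outline --- non-central kernel element, Lemma \ref{normal}, and handling the finite-image case through congruence quotients --- matches the paper and is not where the difficulty lies.
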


\begin{proof}[Outline of the proof]
Note that the matrix $A=%
\begin{pmatrix}
0 & -1 \\ 
1 & -1%
\end{pmatrix}%
$ is of order $3.$ When $n$ is even, $\mathrm{SL}_{n}(\mathbb{Z})$ contains $%
\frac{n}{2}$ copies of $\mathbb{Z}/3$ generated by $A$ along the diagonal.
The previous lemma implies that this subgroup $(\mathbb{Z}/3)^{\frac{n}{2}}$
cannot act effectively on $S^{k}.$ The normal subgroup generated by this
non-trivial element acting trivially is the whole $\mathrm{SL}_{n}(\mathbb{Z}%
)$ (see Lemma \ref{normal}). When $n$ is odd, an additional argument proves
that the bound for the rank of $(\mathbb{Z}/2)^{n-1}$ in the previous lemma
could be improved by one, considering the fact that the action of $\mathrm{SL%
}_{n}(\mathbb{Z})$ is orientation-preserving. Using the elementary $2$%
-subgroup along the diagonal of $\mathrm{SL}_{n}(\mathbb{Z}),$ a similar
argument finishes the proof (for more details, see \cite{bv}, Proposition
4.13).
\end{proof}

In a similar way, the following can be proved.

\begin{theorem}
\label{bvacy}(\cite{bv}, Theorem 1.2) Any action of $\mathrm{SL}_{n}(\mathbb{%
Z})$ $(n\geq 3)$ on the acyclic space $\mathbb{R}^{k}$ $(k<n)$ by
homeomorphisms is trivial.
\end{theorem}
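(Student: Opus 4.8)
The plan is to run the same Smith-theoretic argument that proves the preceding sphere theorem, but with spheres replaced by the acyclic space and with the acyclic halves of the cited lemmas. The starting observation is that $\mathbb{R}^{k}$ is $\mathbb{Z}/p$-acyclic for every prime $p$ and is a $k$-dimensional homology manifold over each $\mathbb{Z}/p$; thus in the rank-estimate lemma (\cite{bv}, Theorem 4.7) it plays the role of the ``$\mathbb{Z}/p$-acyclic $(m+1)$-dimensional homology manifold'' with $m=k-1$. This single index shift is exactly what turns the sphere bound $k<n-1$ into the bound $k<n$ claimed here: the rank estimate forbidding an effective $(\mathbb{Z}/p)^{d}$-action on a generalized $(k-1)$-sphere forbids it as well on the acyclic $\mathbb{R}^{k}$.

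First I would fix a homomorphism $f:\mathrm{SL}_{n}(\mathbb{Z})\rightarrow \mathrm{Homeo}(\mathbb{R}^{k})$ and aim to produce a non-central torsion element in $\ker f$. For $n$ even, take the subgroup $(\mathbb{Z}/3)^{n/2}$ obtained by placing the order-$3$ matrix $A$ of the preceding sphere theorem in the $n/2$ diagonal blocks. With $d=n/2$ and $m=k-1$ the odd-prime inequality $m<2d-1$ reads $k<n$, so this subgroup cannot act effectively on $\mathbb{R}^{k}$; any nontrivial element in the kernel of its action has order $3$ and is therefore non-central (the center lies in $\{\pm I\}$).

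For $n$ odd the analogous $(\mathbb{Z}/3)^{(n-1)/2}$ only yields $k<n-1$, so instead I would use the diagonal subgroup $(\mathbb{Z}/2)^{n-1}$ of sign matrices of determinant one. The unimproved inequality $m<d-1$ again gives only $k<n-1$; to reach $k<n$ one must invoke that $\mathrm{SL}_{n}(\mathbb{Z})$ equals its commutator subgroup, so the action on the orientable manifold $\mathbb{R}^{k}$ is orientation-preserving, which improves the effective $\mathbb{Z}/2$-rank bound by one precisely as in \cite{bv} for the sphere. Since $-I\notin \mathrm{SL}_{n}(\mathbb{Z})$ for $n$ odd, the center is trivial and the resulting kernel element is again non-central.

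In either case $\ker f$ is a normal subgroup containing a non-central element, so by Lemma \ref{normal} it has finite index and the image of $f$ is finite. To upgrade finiteness to triviality I would note that every conjugate of the trivially-acting torsion element $g$ also acts trivially, so the whole normal closure $\langle\langle g\rangle\rangle$ lies in $\ker f$; reducing modulo a prime $p$ shows that the image of $g$ is a non-central element of the almost simple group $\mathrm{SL}_{n}(\mathbb{F}_{p})$, whose normal closure is everything, and combining over all primes via the congruence subgroup property forces $\langle\langle g\rangle\rangle=\mathrm{SL}_{n}(\mathbb{Z})$, whence $f$ is trivial. The main obstacle is the odd case: the naive dimension count loses exactly the one unit of dimension needed, and recovering it genuinely requires the orientation-preserving refinement of the Smith-theory rank estimate rather than any purely group-theoretic input; once that refinement is in hand, the even and odd cases run in parallel and the conclusion follows as above.
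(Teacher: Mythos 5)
Your proposal is correct and is exactly the argument the paper intends: the paper gives no separate proof of Theorem \ref{bvacy}, saying only that it follows ``in a similar way'' from the sphere theorem, and your write-up fills in precisely that adaptation --- using the $\mathbb{Z}/p$-acyclic halves of Lemma \ref{locsmith} and of \cite{bv}, Theorem 4.7 with the index shift $m=k-1$, the subgroup $(\mathbb{Z}/3)^{n/2}$ for $n$ even, the diagonal $(\mathbb{Z}/2)^{n-1}$ with the orientation-preserving rank refinement for $n$ odd, and then Lemma \ref{normal} plus the congruence subgroup property to pass from a non-central kernel element to triviality. Your dimension counts ($k<n$ in both parities) and the identification of the orientation-preserving refinement as the essential extra input in the odd case match the paper's outline for the sphere case and \cite{bv}'s treatment.
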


\subsection{Manifolds with non-zero Euler characteristics}

For a group $G$ and a prime $p,$ let the $p$-rank be $\mathrm{rk}%
_{p}(G)=\sup \{k\mid (\mathbb{Z}/p)^{k}\hookrightarrow G\}$. It is possible
that $\mathrm{rk}_{p}(G)=+\infty .$ The following result relates the $p$%
-rank of the group acting effectively and the Euler characteristic of the
acted manifold.

\begin{theorem}
\label{prop}(\cite{ye182}) Let $M^{r}$ be a first countable connected
cohomology $r$-manifold over $\mathbb{Z}/p$ and $\mathrm{Homeo}(M)$ the
group of self-homeomorphisms. We adapt the convention that $p^{n}=1$ when $%
n<0.$ Then the $p$-rank satisfies%
\begin{equation*}
p^{\mathrm{rk}_{p}(\mathrm{Homeo}(M))-[\frac{r}{2}]}\mid \chi (M;\mathbb{Z}%
/p)
\end{equation*}%
when $p$ is odd and 
\begin{equation*}
2^{\mathrm{rk}_{2}(\mathrm{Homeo}(M))-r}\mid \chi (M;\mathbb{Z}/2)
\end{equation*}%
when $p=2.$ If $M^{r}$ $(r\geq 1)$ is an oriented connected cohomology $r$%
-manifold over $\mathbb{Z}$ and $\mathrm{Homeo}_{+}(M)$ is the group of
orientation-preserving self-homeomorphisms, we have 
\begin{equation*}
2^{\mathrm{rk}_{2}(\mathrm{Homeo}(M))-r+1}\mid \chi (M;\mathbb{Z}/2).
\end{equation*}
\end{theorem}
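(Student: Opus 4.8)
The plan is to establish all three divisibilities at once by strong induction on the rank $d=\mathrm{rk}_p(\mathrm{Homeo}(M))$ (and on $\mathrm{rk}_2(\mathrm{Homeo}_+(M))$ in the oriented case), the inductive statement being: any effective action of $(\mathbb{Z}/p)^d$ on a first countable cohomology $r$-manifold over $\mathbb{Z}/p$ forces $v_p\big(\chi(M;\mathbb{Z}/p)\big)\ge d-e(r)$, where $e(r)=[\tfrac{r}{2}]$ for odd $p$, $e(r)=r$ for $p=2$, and $e(r)=r-1$ in the oriented $p=2$ case (here $v_p$ is the $p$-adic valuation). When the exponent is $\le 0$ the claim is vacuous by the convention $p^{n}=1$ for $n<0$; this disposes of the base cases, since an effective $(\mathbb{Z}/p)^d$-action on a point forces $d=0$. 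So I fix $d\ge 1$ with positive exponent, choose an order-$p$ subgroup $C\cong\mathbb{Z}/p$ of $G=(\mathbb{Z}/p)^d$, and split the compactly supported mod-$p$ Euler characteristic (which equals $\chi(M;\mathbb{Z}/p)$ for compact $M$ and is the correct additive invariant in general) along the closed fixed set $M^{C}$ and its open, $C$-free complement $U=M\setminus M^{C}$:
\begin{equation*}
\chi(M;\mathbb{Z}/p)=\chi(M^{C};\mathbb{Z}/p)+p\,\chi\big((M\setminus M^{C})/C;\mathbb{Z}/p\big),
\end{equation*}
using additivity of the compactly supported Euler characteristic together with its multiplicativity $\chi(U)=p\,\chi(U/C)$ under the free $C$-action on $U$.

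For the free term I would apply the inductive hypothesis to the cohomology $r$-manifold $(M\setminus M^{C})/C$ carrying the residual $G/C\cong(\mathbb{Z}/p)^{d-1}$-action, obtaining $v_p\big(\chi((M\setminus M^{C})/C)\big)\ge (d-1)-e(r)$; the extra factor $p$ then upgrades this to $\ge d-e(r)$, exactly the target. For the fixed term the gain must come from dimension rather than from an honest factor of $p$: by Smith theory (Lemma \ref{locsmith}) $M^{C}$ is again a finite-dimensional cohomology manifold over $\mathbb{Z}/p$, and I feed it to the inductive hypothesis with a strictly smaller dimension. The whole argument therefore hinges on simultaneously controlling the residual rank on $M^{C}$ and the drop in its dimension.

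The crux, and the place where the three constants $[\tfrac{r}{2}]$, $r$, $r-1$ are produced, is this control. Since $G/C$ need not act effectively on $M^{C}$, I replace $C$ by the pointwise stabilizer $L=\{g\in G:\ g|_{M^{C}}=\mathrm{id}\}\supseteq C$; then $M^{L}=M^{C}=:F$, the quotient $G/L$ of rank $d-\mathrm{rk}(L)$ acts effectively on $F$, and I apply the inductive hypothesis to $F$ at this reduced rank. To bound $\dim F$, connectedness and first countability enter: a nonidentity element of $L$ acting trivially near a point of $F$ would act trivially on all of $M$, contradicting effectiveness, so $L$ acts effectively on the normal cohomology sphere over $\mathbb{Z}/p$ at a point of $F$. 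Iterating Lemma \ref{locsmith}(i) over a composition series of $L$ bounds how small such a sphere can be: the codimension of $F$ is at least $2\,\mathrm{rk}(L)$ for odd $p$ (each step drops the fixed cohomology-sphere dimension by the even amount forced by the parity clause), at least $\mathrm{rk}(L)$ for $p=2$, and at least $\mathrm{rk}(L)+1$ in the oriented $p=2$ case, by exactly the orientation improvement already used for spheres (each orientation-preserving involution has even normal codimension, improving the $(\mathbb{Z}/2)$-sphere bound by one). Substituting $\dim F\le r-2\,\mathrm{rk}(L)$ (resp. $r-\mathrm{rk}(L)$, resp. $r-\mathrm{rk}(L)-1$) into the inductive bound $v_p(\chi(F))\ge (d-\mathrm{rk}(L))-e(\dim F)$ collapses the dependence on $\mathrm{rk}(L)$ and returns precisely $d-e(r)$ in each case; together with the free term this closes the induction.

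The step I expect to be the main obstacle is the book-keeping when $F=M^{C}$ is disconnected and $G/L$ permutes its components. Components lying in a nontrivial permutation orbit must be treated like the free term, the permutation supplying the missing factors of $p$, while the stabilizer of a single component, after one further $L$-type reduction, supplies the isotropy rank used above. Verifying that these two mechanisms always combine to yield at least $d-e(r)$ uniformly over all components is the technical heart of the proof; by contrast, the normal-representation computation of the codimension, and its orientation-sensitive sharpening, is the conceptual core that separates the three constants.
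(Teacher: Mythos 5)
First, a point of calibration: the survey you were handed states this theorem without proof, citing \cite{ye182}, so the comparison below is with the argument in that source, which rests on Smith theory together with a codimension theorem proved via Borel's formula. Your global scheme --- induction on the rank $d$, the splitting $\chi_c(M)=\chi_c(M^{C})+p\,\chi_c\bigl((M\setminus M^{C})/C\bigr)$ of the compactly supported Euler characteristic, passage to the pointwise stabilizer $L$ of $F=M^{C}$, and the arithmetic converting a codimension bound on $F$ into the exponents $[\frac{r}{2}]$, $r$, $r-1$ --- has the right shape, and your arithmetic does close correctly. But the step you yourself call the conceptual core has a genuine gap, at exactly the point where the topological category differs from the smooth one. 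You justify the codimension bound by letting $L$ act ``effectively on the normal cohomology sphere at a point of $F$'' and iterating Lemma \ref{locsmith}(i) along a composition series of $L$. No such normal sphere exists: a topological action on a (cohomology) manifold need not be locally linear, and there is no equivariant normal bundle, tube, or link at a fixed point. Moreover the fallback --- iterating Lemma \ref{locsmith}(i) on $M$ itself along a composition series $C=L_{1}<L_{2}<\dots<L_{m}=L$ --- visibly fails: since $L$ fixes $F=M^{C}$ pointwise by definition, $M^{L_{i}}=F$ for every $i\geq 1$, so every step after the first produces no dimension drop at all, and the iteration yields only $\mathrm{codim}\,F\geq 2$ (resp.\ $\geq 1$ for $p=2$), independently of $\mathrm{rk}(L)$. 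Newman's theorem (which you invoke correctly, and which does hold here) cannot rescue this: an element of $L$ may fix a component of $F$ pointwise while acting effectively on $M$, because that component is not open in $M$.

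The missing ingredient is Borel's formula (\cite{Bo}, Chapter XIII): for $(\mathbb{Z}/p)^{m}$ acting on a cohomology $r$-manifold over $\mathbb{Z}/p$ with fixed point $x$, one has $r-\dim_{x}M^{G}=\sum_{H}\bigl(\dim_{x}M^{H}-\dim_{x}M^{G}\bigr)$, the sum running over the corank-one subgroups $H<G$. Combined with Newman's theorem and Lemma \ref{locsmith}, an induction on $m$ gives the Mann--Su theorem: an effective action of $(\mathbb{Z}/p)^{m}$ satisfies $\dim_{x}M^{G}\leq r-2m$ for odd $p$, $\leq r-m$ for $p=2$, and $\leq r-m-1$ in the oriented orientation-preserving case (where one also needs that the fixed set of an orientation-preserving involution of an oriented cohomology manifold has even codimension). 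This codimension theorem is the actual engine of the proof in \cite{ye182}, and it is what your normal-sphere heuristic is standing in for; without it your induction does not close. I would also point out that once you have it, your entire inductive apparatus --- including the disconnectedness bookkeeping you flag as the technical heart --- can be bypassed: it says every isotropy group $G_{x}$ of an effective $(\mathbb{Z}/p)^{d}$-action has $\mathrm{rk}(G_{x})\leq [\frac{r}{2}]$ (resp.\ $r$, $r-1$); stratifying $M$ by orbit type gives $\chi_c(M)=\sum_{H\leq G}\chi_c(M_{H})$, where $M_{H}$ is the set of points with stabilizer exactly $H$ and $G/H$ acts freely on $M_{H}$, so $p^{\,d-\mathrm{rk}(H)}$ divides $\chi_c(M_{H})$; every term is then divisible by $p^{\,d-[\frac{r}{2}]}$ (resp.\ $2^{\,d-r}$, $2^{\,d-r+1}$), which is the theorem.
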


In particular, when $(\mathbb{Z}/p)^{k}$ acts effectively on a manifold $M,$
we have that $p^{k-[\frac{r}{2}]}\mid \chi (M;\mathbb{Z}/p)$ when $p>2$ and $%
p^{k-r}\mid \chi (M;\mathbb{Z}/2)$ when $p=2.$ Based on the previous result,
we have the following criterion for non-trivial matrix group actions.

\begin{theorem}
\label{ye}(\cite{ye182}) Let $M^{r}$ be a connected (resp. orientable)
manifold with the Euler characteristic $\chi (M)\not\equiv 0\func{mod}3$
(resp. $\chi (M)\not\equiv 0\func{mod}6$). Then any group action of $\mathrm{%
SL}_{n}(\mathbb{Z})$ $(n>r+1$) on $M^{r}$ by homeomorphisms is trivial.
\end{theorem}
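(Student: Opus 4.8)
The plan is to detect the action through large elementary abelian $p$-subgroups of $\mathrm{SL}_{n}(\mathbb{Z})$ and feed them into Theorem \ref{prop}. The basic input is that the matrix $A=\left(\begin{smallmatrix}0&-1\\1&-1\end{smallmatrix}\right)$ has order $3$; placing copies of $A$ in disjoint diagonal $2\times 2$ blocks produces a subgroup $E\cong(\mathbb{Z}/3)^{[\frac{n}{2}]}$ of $\mathrm{SL}_{n}(\mathbb{Z})$, none of whose nontrivial elements is central. Since $n>r+1$ means $n\geq r+2$, we have the crucial inequality $[\frac{n}{2}]-[\frac{r}{2}]\geq 1$, which is really the only place the dimension restriction enters.

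First I would suppose that the restriction to $E$ of a homomorphism $f:\mathrm{SL}_{n}(\mathbb{Z})\to\mathrm{Homeo}(M)$ is effective. Applying the ``in particular'' form of Theorem \ref{prop} to $(\mathbb{Z}/3)^{[\frac{n}{2}]}$ gives $3^{[\frac{n}{2}]-[\frac{r}{2}]}\mid\chi(M;\mathbb{Z}/3)$. As $\chi(M;\mathbb{Z}/3)=\chi(M)$ and the exponent is at least $1$, this forces $3\mid\chi(M)$, contradicting $\chi(M)\not\equiv 0\bmod 3$. Hence $f|_{E}$ is not effective, and some nontrivial $g\in E$ of order $3$ lies in $\ker f$.

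The next step is to upgrade this to triviality. Since $g$ is non-central, Lemma \ref{normal} shows its normal closure $N$ has finite index; by the congruence subgroup property $N\supseteq\Gamma(m)$ for some $m$, so $N$ is the preimage of the normal closure of $\bar g$ in $\mathrm{SL}_{n}(\mathbb{Z}/m)\cong\prod_{p\mid m}\mathrm{SL}_{n}(\mathbb{Z}/p^{a})$. Because $A\not\equiv I$ modulo every prime, $\bar g$ maps to a nontrivial non-central element in each simple quotient $\mathrm{PSL}_{n}(\mathbb{Z}/p)$; simplicity of $\mathrm{PSL}_{n}(\mathbb{Z}/p)$ together with perfectness of $\mathrm{SL}_{n}(\mathbb{Z}/p^{a})$ (here $n\geq 3$) then forces this normal closure to be everything. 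Thus $N=\mathrm{SL}_{n}(\mathbb{Z})$, and since $N\subseteq\ker f$ the homomorphism $f$ is trivial, settling the connected (mod $3$) case.

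For an orientable $M$ with $\chi(M)\not\equiv 0\bmod 6$ the hypothesis gives $3\nmid\chi$ or $2\nmid\chi$; if $3\nmid\chi$ the argument above applies verbatim, so assume $2\nmid\chi$. Here I would use the diagonal sign matrices, which furnish $(\mathbb{Z}/2)^{n-1}\subseteq\mathrm{SL}_{n}(\mathbb{Z})$, together with the sharpened oriented estimate $2^{\mathrm{rk}_{2}(\mathrm{Homeo}_{+}(M))-r+1}\mid\chi$ of Theorem \ref{prop}: after passing to the orientation-preserving finite-index subgroup $f^{-1}(\mathrm{Homeo}_{+}(M))$ one still has an effective $(\mathbb{Z}/2)^{n-2}$, and the exponent $n-1-r\geq 1$ again forces $2\mid\chi$, a contradiction, unless this $2$-group acts non-effectively. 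The hard part is exactly the finiteness-to-triviality upgrade in this last case: the large elementary abelian $2$-group consists of diagonal sign matrices, which reduce to the identity modulo $2$ and hence lie in $\Gamma(2)$, so the normal closure of an involution in $\ker f$ is only a proper finite-index subgroup and the clean normal-generation trick used for $p=3$ breaks down, yielding at first only a finite image. Closing this gap requires analyzing that finite image: one shows it is a perfect quotient of $\mathrm{SL}_{n}(\mathbb{Z})$, isolates a nonabelian composition factor $\mathrm{PSL}_{n}(\mathbb{Z}/q)$, and re-detects inside it a large elementary abelian $2$-subgroup—for instance via involutions that remain visible modulo an odd prime $q$—to reach the same contradiction. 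This passage from finiteness to triviality in the mod $2$ situation, rather than the cohomological input of Theorem \ref{prop}, is where I expect the genuine difficulty to lie.
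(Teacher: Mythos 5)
Your core argument is exactly the paper's: embed $(\mathbb{Z}/3)^{[n/2]}$ via diagonal blocks of the order-three matrix $A$, and the sign matrices $(\mathbb{Z}/2)^{n-1}$ acting orientation-preservingly, feed both into Theorem \ref{prop}, and use $n\geq r+2$ to make the relevant exponent positive, contradicting the hypothesis on $\chi(M)$. Where you go beyond the survey's outline is the upgrade from ``not effective'' to ``trivial,'' which the outline silently defers to \cite{ye182}: your mod-$3$ upgrade is complete and correct, since a nontrivial block element is non-scalar modulo every prime, so Lemma \ref{normal}, the congruence subgroup property, simplicity of $\mathrm{PSL}_n(\mathbb{Z}/p)$ and perfectness of $\mathrm{SL}_n(\mathbb{Z}/p^{a})$ force its normal closure to be all of $\mathrm{SL}_n(\mathbb{Z})$. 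You also correctly flag the genuine difficulty in the mod-$2$ case (sign matrices die mod $2$, so one only gets a finite image). One caution about your proposed fix there: ``re-detecting a large elementary abelian $2$-subgroup inside a composition factor'' is not automatic, because $p$-rank does not pull back along surjections (the quaternion group surjects onto $(\mathbb{Z}/2)^{2}$ but has $2$-rank one). The correct repair is the subgroup, not subquotient, statement: by the congruence subgroup property any nontrivial finite image $Q$ of $\mathrm{SL}_n(\mathbb{Z})$ contains, as an actual subgroup, a quotient of some $\mathrm{SL}_n(\mathbb{Z}/q^{a})$ by a kernel whose reduction mod $q$ is central; for $q$ odd the sign-matrix group then injects into $Q$ up to a rank-one loss (only $\pm I$ can die), and for $q=2$ a unipotent $(\mathbb{Z}/q)^{n-1}$ survives, so Theorem \ref{prop} again gives $2\mid\chi(M)$. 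This is what your phrase ``involutions that remain visible modulo an odd prime'' must mean, and with it your plan closes; as written, that step is still a sketch.
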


\begin{proof}[Outline of the proof]
Suppose that $\mathrm{SL}_{n}(\mathbb{Z})$ acts effectively on $M.$ Since
the matrix $A=%
\begin{pmatrix}
0 & -1 \\ 
1 & -1%
\end{pmatrix}%
$ is of order $3,$ the group $\mathrm{SL}_{n}(\mathbb{Z})$ contains $(%
\mathbb{Z}/3)^{[n/2]}$ as a subgroup. The previous theorem implies that $%
3\mid \chi (M).$ Consider the elementary $2$-group $(\mathbb{Z}/2)^{n-1}<%
\mathrm{SL}_{n}(\mathbb{Z})$ given by the diagonal entries $\mathrm{diag}%
(\pm 1,\cdots ,\pm 1).$ When $n\geq 3$ and $M$ is orientable, the action of $%
\mathrm{SL}_{n}(\mathbb{Z})$ is orientation-preserving. The previous theorem
implies that $2\mid \chi (M)$. Therefore, we have $6\mid \chi (M).$
\end{proof}

\bigskip

Theorem \ref{bvacy} is a special case of Theorem \ref{ye}, since the Euler
characteristic of an acyclic space is one.

\subsection{Group actions on flat manifolds}

Let $\Gamma $ be a group acting freely, isometrically, properly
discontinuously and cocompactly on the Euclidean space $\mathbb{R}^{n}.$ The
quotient space $M=\mathbb{R}^{n}/\Gamma $ is called a flat manifold. A
classical result of Bieberbach implies that there is a short exact sequence
of groups%
\begin{equation*}
1\rightarrow \mathbb{Z}^{n}\rightarrow \Gamma \rightarrow \Phi \rightarrow 1,
\end{equation*}%
where $\Phi <\mathrm{GL}_{n}(\mathbb{Z})$ is called the holonomy group of $%
M. $ Topologically, a (closed) flat manifold $M$ is covered by the torus $%
T^{n}$ with $\Phi $ as the deck transformation group.

We give a sufficient and necessary condition for a finite group to act
effectively on a flat manifold:

\begin{theorem}
\label{th3}(\cite{ye19}, Theorem 1.2) A finite group $G$ acts effectively on
a closed flat manifold $M^{n}$ with the fundamental group $\pi $ and the
holonomy group $\Phi $ by homeomorphisms if and only if there is an abelian
extension 
\begin{equation*}
1\rightarrow A\rightarrow G\rightarrow Q\rightarrow 1
\end{equation*}%
such that

\begin{enumerate}
\item[(i)] $Q\cong \Phi ^{\ast }/\Phi $ for a finite subgroup $\Phi ^{\ast }<%
\mathrm{GL}_{n}(\mathbb{Z});$

\item[(ii)] there is a $(\Phi ^{\ast },Q)$-equivariant surjection $\ \alpha :%
\mathbb{Z}^{n}\twoheadrightarrow A,$ and a commutative diagram%
\begin{equation*}
\begin{array}{ccccccccc}
1 & \rightarrow & \mathbb{Z}^{n} & \rightarrow & G^{\ast } & \rightarrow & 
\Phi ^{\ast } & \rightarrow & 1 \\ 
&  & \alpha \downarrow &  & f\downarrow &  & \downarrow &  &  \\ 
1 & \rightarrow & A & \overset{i}{\rightarrow } & G & \rightarrow & Q & 
\rightarrow & 1%
\end{array}%
\end{equation*}%
with torsion-free kernel $\ker f=\pi .$ Here \ $\alpha (gx)=\bar{g}\alpha
(x) $ for any $x\in \mathbb{Z}^{n},g\in \Phi ^{\ast }$, where $\bar{g}\in Q$
acts on the abelian group $A$ through the exact sequence and $\ker (\Phi
^{\ast }\rightarrow Q)=\Phi $.
\end{enumerate}
\end{theorem}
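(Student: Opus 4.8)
The plan is to prove the two implications separately, using the Bieberbach sequence $1\rightarrow \mathbb{Z}^{n}\rightarrow \pi \rightarrow \Phi \rightarrow 1$ together with the fact that $M=\mathbb{R}^{n}/\pi $ is aspherical, with universal cover $\mathbb{R}^{n}$ and intermediate torus cover $T^{n}=\mathbb{R}^{n}/L$, where $L\cong \mathbb{Z}^{n}$ is the translation lattice of $\pi $. The guiding principle is that the enlarged data $(\Phi ^{\ast },G^{\ast })$ should be read off from the lift of the $G$-action to $\mathbb{R}^{n}$, and that $G$ acts on $M$ precisely as $G^{\ast }/\pi $ acts on $\mathbb{R}^{n}/\pi $.

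For the sufficiency direction, I would realize the top row geometrically. Since $\Phi ^{\ast }<\mathrm{GL}_{n}(\mathbb{Z})$ is finite, the extension $1\rightarrow \mathbb{Z}^{n}\rightarrow G^{\ast }\rightarrow \Phi ^{\ast }\rightarrow 1$ is a crystallographic group (by the converse to Bieberbach's theorem) and can be conjugated to act affinely, properly discontinuously and cocompactly on $\mathbb{R}^{n}$. By hypothesis $\pi =\ker f$ is torsion-free, hence a Bieberbach group acting freely; its translation lattice is $\ker \alpha \cong \mathbb{Z}^{n}$ and its holonomy is $\ker (\Phi ^{\ast }\rightarrow Q)=\Phi $, so $\mathbb{R}^{n}/\pi $ is precisely $M$. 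As $\pi \triangleleft G^{\ast }$, the quotient $G=G^{\ast }/\pi $ acts on $\mathbb{R}^{n}/\pi =M$. Effectiveness is the only remaining point: an affine $\gamma \in G^{\ast }$ descending to $\mathrm{id}_{M}$ is a deck transformation of $\mathbb{R}^{n}\rightarrow M$, hence lies in $\pi $, so $\ker (G^{\ast }\rightarrow \mathrm{Homeo}(M))=\pi $ and the $G$-action is effective.

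For the necessity direction, I would first lift the action to the universal cover: the group $E$ of all lifts of elements of $G$ fits in $1\rightarrow \pi \rightarrow E\rightarrow G\rightarrow 1$ and acts on $\mathbb{R}^{n}$. The lattice $L$ is characteristic in $\pi $ (the unique maximal abelian normal subgroup), hence normal in $E$, so $\bar{E}=E/L$ acts on $T^{n}=\mathbb{R}^{n}/L$ with $1\rightarrow \Phi \rightarrow \bar{E}\rightarrow G\rightarrow 1$. Taking the induced action on $H_{1}(T^{n};\mathbb{Z})=L$ gives $\rho :\bar{E}\rightarrow \mathrm{GL}(L)=\mathrm{GL}_{n}(\mathbb{Z})$, with image $\Phi ^{\ast }$ and with $\rho |_{\Phi }$ the faithful holonomy representation. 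Since $\rho |_{\Phi }$ is injective, $\ker \rho $ meets $\Phi $ trivially and injects into $\bar{E}/\Phi =G$; letting $A$ be its image yields $G/A\cong \Phi ^{\ast }/\Phi =:Q$, which is condition (i).

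The crux is to promote $\ker \rho $ to a lattice of translations, producing the enlarged data and the diagram in (ii). The essential input is the topological rigidity of the torus: a finite group acting on $T^{n}$ and inducing the identity on $H_{1}(T^{n};\mathbb{Z})$ is conjugate in $\mathrm{Homeo}(T^{n})$ to a group of translations. Applying this to $\ker \rho $, its elements become translations by vectors in $\tfrac{1}{m}L$; together with $L$ these generate a lattice $\mathbb{Z}^{n}\supset L$ on which $\Phi ^{\ast }=\rho (\bar{E})$ acts by integer matrices (so $\Phi ^{\ast }<\mathrm{GL}_{n}(\mathbb{Z})$), with $\mathbb{Z}^{n}/L\cong A$ and $\alpha :\mathbb{Z}^{n}\twoheadrightarrow A$ the quotient (so $\ker \alpha =L$). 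Lifting the $\bar{E}$-action on $T^{n}$ to $\mathbb{R}^{n}$ gives the group of lifts $G^{\ast }$ with $1\rightarrow L\rightarrow G^{\ast }\rightarrow \bar{E}\rightarrow 1$; as $\ker \rho $ lifts to translations, its preimage is exactly $\mathbb{Z}^{n}$, whence $1\rightarrow \mathbb{Z}^{n}\rightarrow G^{\ast }\rightarrow \Phi ^{\ast }\rightarrow 1$. The composite $f:G^{\ast }\rightarrow G^{\ast }/L=\bar{E}\rightarrow \bar{E}/\Phi =G$ has kernel the lift of $\Phi $, namely the torsion-free Bieberbach group $\pi $, and the equivariance $\alpha (gx)=\bar{g}\,\alpha (x)$ together with commutativity of the left square reflect that $\bar{E}$ permutes the generators of $\ker \rho $ exactly via the linear action $\rho $ on the added translation vectors. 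I expect this linearization—upgrading a homologically trivial finite topological action on $T^{n}$ to an honest translation action—to be the main obstacle, since in the purely topological category it is unavailable by averaging and relies on genuine rigidity of the torus.
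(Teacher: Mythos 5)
Note first that the survey states this theorem only as a citation of \cite{ye19} and gives no proof, so your proposal has to stand on its own. Your sufficiency direction is essentially sound: the Zassenhaus (converse Bieberbach) realization of $G^{\ast }$ as a crystallographic group is legitimate because $\Phi ^{\ast }$ acts faithfully on $\mathbb{Z}^{n}$, making $\mathbb{Z}^{n}$ self-centralizing; $\ker f$ is then a finite-index torsion-free subgroup, hence a Bieberbach group with lattice $\ker \alpha $ and holonomy $\Phi $, Bieberbach rigidity identifies $\mathbb{R}^{n}/\ker f$ with $M$, and the deck-transformation argument gives effectiveness. The genuine gap is in the necessity direction: the entire argument is routed through the assertion that a finite subgroup of $\mathrm{Homeo}(T^{n})$ acting trivially on $H_{1}(T^{n};\mathbb{Z})$ is conjugate to a group of translations, and you do not prove it --- you explicitly flag it as the main obstacle. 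That statement is true but silently packages two nontrivial theorems: first, that such an action must be \emph{free} (a priori a homologically trivial finite topological action could have fixed points; ruling this out already needs Smith theory), and second, granted freeness, that the quotient --- a closed aspherical manifold whose fundamental group is forced by Schur's theorem to be $\mathbb{Z}^{n}$ --- is homeomorphic to $T^{n}$, which is the topological rigidity of tori (Hsiang--Wall for $n\geq 5$, Freedman for $n=4$, geometrization for $n=3$). As written, the core of the ``only if'' direction is missing, and with it even the abelianness of $A$ required by the phrase ``abelian extension,'' which you never verify independently of the asserted lemma.

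The gap can be filled much more cheaply, and in a way consistent with the Smith-theoretic toolkit the survey itself emphasizes (Lemma \ref{locsmith}): you do not need to conjugate $\ker \rho $ to translations, only to show that its preimage $C=C_{E}(L)$ in the lift group $E$ is torsion-free. Indeed, if $t\in C_{E}(L)$ had prime order $p$, then $\mathrm{Fix}(t)\subset \mathbb{R}^{n}$ would be nonempty and $\mathbb{Z}/p$-acyclic by Lemma \ref{locsmith}(ii); since $L$ commutes with $t$, the lattice $L$ acts freely and properly discontinuously on $\mathrm{Fix}(t)$, so $H^{n}(\mathrm{Fix}(t)/L;\mathbb{Z}/p)\cong H^{n}(L;\mathbb{Z}/p)\neq 0$ forces $\dim _{\mathbb{Z}/p}\mathrm{Fix}(t)\geq n$, contradicting the Newman--Smith bound that the fixed set of a nontrivial finite-order homeomorphism of an $n$-manifold has dimension at most $n-1$. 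Once $C$ is torsion-free, Schur's theorem (the center of $C$ contains $L$ with finite index, so $[C,C]$ is finite, hence trivial) gives $C\cong \mathbb{Z}^{n}$; then $C$ is a normal, self-centralizing lattice in $E$, so $E$ is abstractly crystallographic with point group $\Phi ^{\ast }=E/C<\mathrm{GL}_{n}(\mathbb{Z})$, the subgroup $A=C/L$ is automatically abelian, $\pi \cap C=L$ (because $L$ is self-centralizing in $\pi $) identifies $\ker (\Phi ^{\ast }\rightarrow Q)$ with $\Phi $, and conditions (i) and (ii) fall out of exactly the diagram chase you set up, with $G^{\ast }=E$, $f:E\rightarrow G$ the original projection, and $\alpha =f|_{C}$. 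No torus rigidity enters anywhere; your version is not wrong if its key lemma is granted, but it replaces an elementary Smith-theory-plus-group-theory step by a theorem whose proof is far deeper than the result being proved.
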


The condition looks a bit complicated, partially because of the holonomy
group $\Phi $. For torus $M=T^{n}$ (where $\Phi $ is trivial), we have the
following simpler characterization. To the best of our knowledge, this
characterization has so far not been stated explicitly in the literature,
except possibly for low-dimensional cases (eg. $n=1,2$).

\begin{theorem}
\label{torus}(\cite{ye19}, Theorem 1.3) A finite group $G$ acts effectively
on a torus $T^{n}$ if and only if there is an abelian extension 
\begin{equation*}
1\rightarrow A\rightarrow G\rightarrow Q\rightarrow 1
\end{equation*}%
such that

\begin{enumerate}
\item[(i)] $Q<\mathrm{GL}_{n}(\mathbb{Z});$

\item[(ii)] there is a $Q$-equivariant surjection $\ \alpha :\mathbb{Z}%
^{n}\twoheadrightarrow A$ and the cohomology class representing of the
extension lies in the image $\func{Im}(H^{2}(Q;\mathbb{Z}^{n})\rightarrow
H^{2}(Q;A)).$
\end{enumerate}
\end{theorem}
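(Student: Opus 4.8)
The plan is to derive Theorem~\ref{torus} from the general flat-manifold criterion Theorem~\ref{th3} by specializing to the case of trivial holonomy. For the torus $M=T^{n}$ the fundamental group is $\pi=\pi_{1}(T^{n})=\mathbb{Z}^{n}$ and the holonomy group is $\Phi=1$, so $\Phi^{\ast}/\Phi=\Phi^{\ast}$ and condition (i) of Theorem~\ref{th3}, namely $Q\cong\Phi^{\ast}/\Phi$ for a finite $\Phi^{\ast}<\mathrm{GL}_{n}(\mathbb{Z})$, collapses to $Q<\mathrm{GL}_{n}(\mathbb{Z})$. The one genuine translation I must make is between the commutative-diagram formulation of Theorem~\ref{th3} and the cohomological formulation of condition (ii) here; the rest is bookkeeping, with care needed in keeping track of the two copies of $\mathbb{Z}^{n}$ that occur.

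For the forward direction (necessity), suppose $G$ acts effectively on $T^{n}$ and feed this into Theorem~\ref{th3}. This produces a finite $\Phi^{\ast}<\mathrm{GL}_{n}(\mathbb{Z})$, an abelian extension $1\to A\to G\to Q\to 1$ with $Q\cong\Phi^{\ast}$, a $Q$-equivariant surjection $\alpha:\mathbb{Z}^{n}\twoheadrightarrow A$, and a commutative diagram whose top row is $1\to\mathbb{Z}^{n}\to G^{\ast}\to\Phi^{\ast}\to 1$ and whose left vertical arrow is $\alpha$. Here the top-left $\mathbb{Z}^{n}$ is the translation lattice $L$ of the crystallographic group $G^{\ast}$, which contains $\pi=\mathbb{Z}^{n}$ as a finite-index subgroup, and $\ker f=\pi=\ker\alpha$. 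Letting $c^{\ast}\in H^{2}(Q;\mathbb{Z}^{n})$ be the class of the top row (with $Q=\Phi^{\ast}$ acting through $\mathrm{GL}_{n}(\mathbb{Z})$), commutativity of the diagram says precisely that the bottom extension is the pushout of the top one along $\alpha$, i.e. $\alpha_{\ast}c^{\ast}=[G]$ in $H^{2}(Q;A)$. Hence the class of $1\to A\to G\to Q\to 1$ lies in $\mathrm{Im}(H^{2}(Q;\mathbb{Z}^{n})\to H^{2}(Q;A))$, which is exactly condition (ii).

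For the converse (sufficiency), I start from the algebraic data and run the sufficiency part of Theorem~\ref{th3} with $\Phi=1$ and $\Phi^{\ast}=Q$. The image condition in (ii) gives a class $c^{\ast}\in H^{2}(Q;\mathbb{Z}^{n})$ with $\alpha_{\ast}c^{\ast}=[G]$; let $1\to\mathbb{Z}^{n}\to G^{\ast}\to Q\to 1$ be the extension representing $c^{\ast}$. Forming the pushout along $\alpha$ and comparing with the given extension yields a homomorphism $f:G^{\ast}\to G$ restricting to $\alpha$ on $\mathbb{Z}^{n}$ and to the identity on $Q$, so that $\ker f=\ker\alpha$; since $\ker\alpha$ is a subgroup of the free abelian group $\mathbb{Z}^{n}$ it is torsion-free, supplying the torsion-free kernel $\pi$ required by Theorem~\ref{th3}. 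Applying Theorem~\ref{th3} then gives an effective action of $G$ on the flat manifold $\mathbb{R}^{n}/\ker\alpha$; because $\ker\alpha$ is itself free abelian of rank $n$ (a finite-index sublattice of $\mathbb{Z}^{n}$) its holonomy is trivial, so this flat manifold is the torus $T^{n}$, as needed.

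I expect the main obstacle to be the faithful identification of the cohomological condition (ii) with the commutative-diagram data of Theorem~\ref{th3}: one must check that the $Q$-module structures on $\mathbb{Z}^{n}$ and on $A$ used to form $\alpha_{\ast}$ agree with those coming from the top and bottom rows of the diagram, and that ``pushout along $\alpha$'' is correctly encoded by $\alpha_{\ast}c^{\ast}=[G]$. A secondary point worth verifying explicitly is that the flat manifold produced in the converse really is a torus rather than a nontrivial flat manifold, which is why it is essential that $\ker\alpha$ be free abelian, equivalently that the holonomy vanish.
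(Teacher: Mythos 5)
Your derivation is correct, and it follows exactly the route the paper intends: the survey gives no proof of this theorem (it is quoted from \cite{ye19}) but explicitly presents it as the trivial-holonomy ($\Phi=1$) specialization of Theorem \ref{th3}, which is precisely your reduction. The two substantive points you supply beyond bookkeeping --- that a morphism of extensions inducing $\alpha$ on the abelian kernels and an isomorphism on the quotients is equivalent to the pushout/class equation $\alpha _{\ast }c^{\ast }=[G]$ in $H^{2}(Q;A)$, and that the flat manifold arising in the converse is genuinely $T^{n}$ because $\ker \alpha $ is a finite-index, hence rank-$n$ and pure-translation, sublattice of $\mathbb{Z}^{n}$ --- are both handled correctly.
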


\begin{example}
When $n=1,M=S^{1},$ we have that the group $Q<\mathrm{GL}_{1}(\mathbb{Z})=%
\mathbb{Z}/2$ and $A$ is a finite cyclic group. Therefore, any finite group $%
G$ acting effectively on the circle $S^{1}$ is a subgroup of a Dihedral
group.
\end{example}

\begin{example}
Let $A_{n}$ be the alternating group. The group $A_{4}$ could act
effectively on the 2-dimensional torus $T^{2}.$
\end{example}

\begin{proof}
Note that $A_{4}\cong (\mathbb{Z}/2)^{2}\rtimes \mathbb{Z}/3,$ where $(%
\mathbb{Z}/2)^{2}=\langle (14)(23),(13)(24)\rangle $ and $\mathbb{Z}%
/3=\langle (123)\rangle ,$ where $\mathbb{Z}/3$ acts on $(\mathbb{Z}/2)^{2}$
through matrix 
\begin{equation*}
\begin{pmatrix}
0 & 1 \\ 
1 & 1%
\end{pmatrix}%
\in \mathrm{GL}_{2}(\mathbb{Z}/2).
\end{equation*}%
Take $Q=\mathbb{Z}/3.$ Define $G^{\ast }=\mathbb{Z}^{2}\rtimes \mathbb{Z}/3,$
where $\mathbb{Z}/3$ acts on the free abelian group $\mathbb{Z}^{2}$ through
matrix 
\begin{equation*}
\begin{pmatrix}
0 & -1 \\ 
1 & -1%
\end{pmatrix}%
\in \mathrm{GL}_{2}(\mathbb{Z}).
\end{equation*}%
Let 
\begin{equation*}
\alpha :\mathbb{Z}^{2}\rightarrow (\mathbb{Z}/2)^{2}
\end{equation*}%
be the modulo 2 map. It is not hard to see that $\alpha $ is $Q$%
-equivariant. Moreover, the map $\alpha $ induces a map between the two
split extensions. Theorem \ref{torus} implies that $A_{4}$ acts effectively
on $T^{2}.$
\end{proof}

\bigskip

Based on this characterization of finite groups acting effectively on flat
manifolds, we confirm Conjecture \ref{conj} for these manifolds. For a ring $%
R,$ let $E_{n}(R)$ be the subgroup of the general linear group $\mathrm{GL}%
_{n}(R)$ generated by elementary matrices. When $R=\mathbb{Z},$ we have $%
E_{n}(R)=\mathrm{SL}_{n}(\mathbb{Z}).$ Denote by $EU_{n}(R,\Lambda )$ the
elementary quadratic group (for a definition, see \cite{ye19}). Denote by $%
F_{n}$ the free group of $n$ letters and $\mathrm{SAut}(F_{n})$ (or $\mathrm{%
SOut}(F_{n})$) the unique index two subgroup of the automorphism group $%
\mathrm{Aut}(F_{n})$ (or outer automorphism group $\mathrm{Out}(F_{n})=%
\mathrm{Aut}(F_{n})/\mathrm{Inn}(F_{n})$).

\begin{theorem}
(\cite{ye19}, Theorem 1.5) Let $G=E_{n}(R)$, $EU_{n}(R,\Lambda ),$ $\mathrm{%
SAut}(F_{n})$ or $\mathrm{SOut}(F_{n}),n\geq 3.$ Suppose that $M^{r}$ is a
closed flat manifold. When $r<n,$ any group action of the group $G$ on $%
M^{r} $ by homeomorphisms is trivial, i.e. is the identity homeomorphism$.$
\end{theorem}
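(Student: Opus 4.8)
The plan is to run, for all four families simultaneously, the lifting argument already used in the excerpt for $S^{1}$ and for surfaces, with Theorem \ref{bvacy} playing the role of the base case on the universal cover. I would exploit three features common to $G=E_{n}(R)$, $EU_{n}(R,\Lambda)$, $\mathrm{SAut}(F_{n})$ and $\mathrm{SOut}(F_{n})$ with $n\ge 3$: (P1) $G$ is perfect; (P2) $G$ has no nontrivial homomorphism into $\mathrm{GL}_{r}(\mathbb{Z})$ when $r<n$, and more generally no nontrivial finite quotient occurring as a subquotient of $\mathrm{GL}_{r}(\mathbb{Z})$ (Margulis superrigidity for $E_{n}$, Bridson--Vogtmann-type representation rigidity for $\mathrm{SAut}(F_{n})$ and $\mathrm{SOut}(F_{n})$); and (P3) the Margulis--Kazhdan normal subgroup behavior of Lemma \ref{normal} and its analogues, together with $H^{2}(G;\mathbb{Z})=0$ (which follows from perfectness and finiteness of the Schur multiplier).

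First I would treat the torus $M=T^{r}$, where everything is transparent. Let $f\colon G\to\mathrm{Homeo}(T^{r})$. The induced action on $H_{1}(T^{r};\mathbb{Z})\cong\mathbb{Z}^{r}$ is a homomorphism $G\to\mathrm{GL}_{r}(\mathbb{Z})$, which is trivial by (P2) since $r<n$. As $T^{r}$ is a $K(\mathbb{Z}^{r},1)$, every $f(g)$ is then homotopic to the identity and lifts to $\mathbb{R}^{r}$; the lifts assemble (after pullback along $f$) into an extension $1\to\mathbb{Z}^{r}\to G^{\ast}\to G\to 1$ on which $G$ acts trivially, because the action on $\pi_{1}$ is trivial. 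Since $H^{2}(G;\mathbb{Z}^{r})=H^{2}(G;\mathbb{Z})^{r}=0$, Lemma \ref{split} splits the extension, producing an action of $G$ on the acyclic space $\mathbb{R}^{r}$. By Theorem \ref{bvacy} (and its analogues for the other three families), applicable because $r<n$, this action is trivial, so each lift, and hence each $f(g)$, is the identity; thus $f$ is trivial.

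For a general closed flat $M^{r}$ the only new feature is the holonomy: $\pi_{1}(M)=\Gamma$ is a Bieberbach group sitting in $1\to\mathbb{Z}^{r}\to\Gamma\to\Phi\to 1$, and now $\Gamma$ is nonabelian. Here I would reduce to finite image and invoke the structure theorem. Using (P3), or the lifting argument above run through the characteristic translation lattice $\mathbb{Z}^{r}\triangleleft\Gamma$ and the center $Z(\Gamma)$ where inner automorphisms act trivially, one shows $f(G)$ cannot be infinite, so $\bar{G}:=f(G)$ is finite and acts effectively. Theorem \ref{th3} then supplies an abelian extension $1\to A\to\bar{G}\to Q\to 1$ with $A$ a quotient of $\mathbb{Z}^{r}$ and $Q\cong\Phi^{\ast}/\Phi$ a subquotient of $\mathrm{GL}_{r}(\mathbb{Z})$. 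The composite $G\twoheadrightarrow\bar{G}\twoheadrightarrow Q$ is a finite quotient of $G$ realized inside a subquotient of $\mathrm{GL}_{r}(\mathbb{Z})$; by (P2) it is trivial, whence $Q=1$ and $\bar{G}=A$ is abelian. But $\bar{G}$ is a quotient of the perfect group $G$, so $\bar{G}$ is perfect, and a perfect abelian group is trivial; hence $f$ is trivial.

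I expect the main obstacle to be precisely the holonomy step, that is, passing from the torus to an arbitrary flat manifold. For nonabelian $\Gamma$ the map $G\to\mathrm{Out}(\Gamma)$ only yields inner-triviality, so the clean lift-and-split of the torus case must be rerun through $Z(\Gamma)$ (where the $G$-action is genuinely trivial) or replaced by the extraction of the linear datum $Q<\mathrm{GL}_{r}(\mathbb{Z})$ from Theorem \ref{th3}; controlling the finite extension that appears when lifting the action to the characteristic torus cover $T^{r}\to M$ is the delicate point. A second, more technical obstacle is that the two nontrivial inputs, namely (P2) and the analogue of Theorem \ref{bvacy} for actions on $\mathbb{R}^{r}$, must be established uniformly for $EU_{n}(R,\Lambda)$, $\mathrm{SAut}(F_{n})$ and $\mathrm{SOut}(F_{n})$, not merely for $\mathrm{SL}_{n}(\mathbb{Z})$; the representation rigidity (P2) for $\mathrm{SAut}(F_{n})$ and $\mathrm{SOut}(F_{n})$ is the part I expect to require the most care.
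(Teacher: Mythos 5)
Your proposal diverges from the paper's route at its most load-bearing step, and that step fails. The survey does not reprove this theorem: it points to \cite{ye19} and indicates the shape of that proof, namely that the characterization of finite effective actions on flat manifolds (Theorems \ref{th3} and \ref{torus}) is applied to \emph{torsion subgroups of $G$ itself}, after which one concludes via normal-generation properties special to these four families (this is what the survey means when it says the proofs "use torsion elements"): suitable finite subgroups of $G$ cannot act effectively when $r<n$, so specific torsion elements lie in $\ker f$, and the normal closure in $G$ of those elements is all of $G$. At no point is the image $f(G)$ shown to be finite, and for good reason: your (P3) is false for every family in the statement. Lemma \ref{normal} is a theorem about lattices, and these groups are not lattices. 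For instance, $\mathrm{SAut}(F_{n})$ surjects onto $\mathrm{SL}_{n}(\mathbb{Z})$ with infinite kernel (the special IA-automorphisms), a normal subgroup that is neither central nor of finite index; likewise $\ker \bigl(E_{n}(\mathbb{Z}[t])\rightarrow E_{n}(\mathbb{Z})\bigr)$, induced by $t\mapsto 0$, is an infinite, non-central normal subgroup of infinite index, and analogous examples exist for $EU_{n}(R,\Lambda )$ and $\mathrm{SOut}(F_{n})$. Since your treatment of general flat manifolds opens with "using (P3) \ldots\ one shows $f(G)$ cannot be infinite," the entire second half of your argument has no starting point.

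Two further inputs you invoke are also unavailable in this generality. First, $H^{2}(G;\mathbb{Z})=0$ does not follow for $E_{n}(R)$ over an arbitrary ring: the Schur multiplier of $E_{n}(R)$ is a $K_{2}$-type group, and $K_{2}(R)$ need not be finite --- for $R=\mathbb{Z}[s^{\pm 1},t^{\pm 1}]$ it contains a free abelian direct summand, so $H^{2}(E_{n}(R);\mathbb{Z})\cong \mathrm{Hom}(H_{2}(E_{n}(R)),\mathbb{Z})\neq 0$ --- and with it the splitting step (Lemma \ref{split}) of your torus argument collapses. Second, the strengthened form of (P2) needed in your flat-manifold step --- that no nontrivial finite quotient of $G$ occurs as a quotient $\Phi ^{\ast }/\Phi $ of finite subgroups of $\mathrm{GL}_{r}(\mathbb{Z})$ --- is genuinely stronger than representation rigidity and is asserted rather than proved; it is a serious issue precisely for $E_{n}(R)$, which has abundant finite quotients $E_{n}(R/I)$ (your citation of Margulis superrigidity is likewise inapplicable, since $E_{n}(R)$ is not a lattice; the correct source for rigidity of these groups is \cite{Ye12}). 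These are not presentational defects: avoiding exactly these pitfalls --- no normal subgroup theorem, no vanishing of $H^{2}$, no easy control of finite quotients --- is why the actual proof works with finite subgroups of $G$ and normal closures of torsion elements rather than with the image of $f$.
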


\begin{remark}
The previous theorem does not hold for $n=2.$ The group $\mathrm{SL}_{2}(%
\mathbb{Z})$ acts non-trivially on $S^{1}$ as shown in the introduction.
\end{remark}

\subsection{Group actions on aspherical manifolds}

A manifold $M$ is aspherical if the universal cover $\tilde{M}$ is
contractible. Note that the sphere $S^{n},n>1,$ is not aspherical.

\begin{conjecture}
\label{conj2}Any group action of $\mathrm{SL}_{n}(\mathbb{Z})$ $(n\geq 3)$
on a closed aspherical $r$-manifold $M^{r}$ by homeomorphisms factors a
finite group if $r<n.$
\end{conjecture}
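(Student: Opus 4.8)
The plan is to mimic the argument given above for surfaces, with the mapping class group replaced by $\mathrm{Out}(\pi_1(M))$ and with Theorem \ref{bvacy} as the final geometric input. Let $f:\mathrm{SL}_n(\mathbb{Z})\to\mathrm{Homeo}(M)$ be a homomorphism. By Lemma \ref{normal} the normal subgroup $\ker f$ is either central, in which case $\mathrm{Im}\,f$ is infinite and there is genuine work to do, or else $\mathrm{Im}\,f=\mathrm{SL}_n(\mathbb{Z})/\ker f$ is already finite. So the entire content lies in the almost-faithful case; moreover, again by Lemma \ref{normal}, it suffices to exhibit one non-central element of $\mathrm{SL}_n(\mathbb{Z})$ lying in $\ker f$, since its normal closure is then all of $\mathrm{SL}_n(\mathbb{Z})$ and the image collapses.

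Since $M$ is aspherical it is a $K(\pi_1(M),1)$, so $f$ induces $\rho:\mathrm{SL}_n(\mathbb{Z})\to\mathrm{Out}(\pi_1(M))$. The central reduction is to show that $\rho$ is trivial. For aspherical $M$ with sufficiently rigid fundamental group (for instance Gromov-hyperbolic, where $\mathrm{Out}(\pi_1(M))$ is finite) one expects Kazhdan's property (T) together with the near-simplicity of $\mathrm{SL}_n(\mathbb{Z})$ to force $\mathrm{Im}\,\rho$ to be finite, in the spirit of Farb--Masur \cite{fs2}; topological rigidity of aspherical manifolds (the Borel conjecture) should then identify homotopically trivial homeomorphisms with the kernel and let one pass to a finite-index subgroup $\Gamma'=\ker\rho$ whose action lifts to the contractible universal cover $\tilde M$. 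One obtains a short exact sequence
\begin{equation*}
1\rightarrow\pi_1(M)\rightarrow G^{\ast}\rightarrow\Gamma'\rightarrow 1,\qquad G^{\ast}<\mathrm{Homeo}(\tilde M).
\end{equation*}

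Next I would split this sequence. When $\pi_1(M)$ is centerless, $Z(\pi_1(M))=0$, hence $H^2(\Gamma';Z(\pi_1(M)))=0$ and Lemma \ref{split} yields a splitting; this realizes $\Gamma'$ as a subgroup of $\mathrm{Homeo}(\tilde M)$ acting on the contractible, hence acyclic, $r$-dimensional space $\tilde M$ with $r<n$. In the most favorable situation $\rho$ is outright trivial, $\Gamma'=\mathrm{SL}_n(\mathbb{Z})$, and since the proof of Theorem \ref{bvacy} uses only acyclicity of the space together with the torsion subgroups $(\mathbb{Z}/3)^{[n/2]}$ and $(\mathbb{Z}/2)^{n-1}$ inside $\mathrm{SL}_n(\mathbb{Z})$, the Smith-theory rank bounds of \cite{bv} are violated for $r<n$; this produces a non-central element in $\ker f$ and finishes the argument through Lemma \ref{normal}.

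The hard part is twofold. First, if one can only show $\mathrm{Im}\,\rho$ finite rather than trivial, then $\Gamma'=\ker\rho$ is a finite-index congruence subgroup that is typically torsion-free (it contains some $\Gamma(m)$ with $m\geq 3$), so the torsion needed for Smith theory disappears and the final step reduces exactly to the still-open Conjecture \ref{1.2} for torsion-free finite-index subgroups acting on $\mathbb{R}^r$; this is where I expect the main difficulty to concentrate. Second, the splitting step genuinely needs $\pi_1(M)$ to be centerless: for tori, nilmanifolds, and any aspherical manifold whose fundamental group has nontrivial center, the class in $H^2(\Gamma';Z(\pi_1(M)))$ need not vanish, and one must instead analyze the extension directly, using the equivariant structure exploited in Theorem \ref{th3}. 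I therefore expect Conjecture \ref{conj2} to be accessible in full only modulo Conjecture \ref{1.2}, with the centerless case the most tractable and the centered case demanding the extra structural input of Theorem \ref{th3}.
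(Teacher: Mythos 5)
The statement you are attacking is Conjecture \ref{conj2}: it is an \emph{open conjecture}, and the paper does not prove it. What the paper proves (Lemma \ref{lemlast}, from \cite{ye181}) is a reduction: an action of the full group $\mathrm{SL}_n(\mathbb{Z})$ on a closed aspherical $M^r$ $(r\leq n-1)$ is trivial if and only if the induced homomorphism $\rho :\mathrm{SL}_n(\mathbb{Z})\rightarrow \mathrm{Out}(\pi_1(M))$ is trivial; consequently the conjecture holds whenever $\mathrm{Hom}(\mathrm{SL}_n(\mathbb{Z}),\mathrm{Out}(\pi_1(M)))=1$, which the paper verifies, e.g., for finitely generated nilpotent fundamental groups. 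Your proposal reproduces the skeleton of that reduction (lift to the universal cover, split by Lemma \ref{split}, kill the lifted action on the contractible $\tilde{M}$ by Theorem \ref{bvacy}), and your closing diagnosis --- that knowing only finiteness of $\mathrm{Im}\,\rho$ leaves you with a torsion-free finite-index subgroup acting on $\tilde{M}$, i.e. with the open Conjecture \ref{1.2} --- is exactly right, and is precisely why the paper never passes to a finite-index subgroup. But two of your intermediate claims are wrong or conceal where the real difficulty sits.

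First, the step ``property (T) together with near-simplicity forces $\mathrm{Im}\,\rho$ finite, in the spirit of Farb--Masur'' is not a known result for general aspherical $M$: Farb--Masur \cite{fs2} is a theorem about mapping class groups of surfaces, and no analogue exists for $\mathrm{Out}(\pi_1(M))$ of an arbitrary closed aspherical manifold. By Lemma \ref{lemlast}, triviality of $\rho$ is \emph{equivalent} to triviality of the action, so this ``step'' is not an auxiliary lemma; it is the entire content of the conjecture, and treating it as heuristically available is the main gap. (Your appeal to the Borel conjecture is also unnecessary: for any action the group $G^{\ast}$ of all lifts gives the extension $1\rightarrow \pi_1(M)\rightarrow G^{\ast}\rightarrow \Gamma'\rightarrow 1$; no topological rigidity is needed to obtain it.) Second, your claim that the splitting ``genuinely needs $\pi_1(M)$ to be centerless'' is false for the full group: since $r<n$, the center $Z(\pi_1(M))$ has rank $<n$, so $\mathrm{SL}_n(\mathbb{Z})$ acts on it trivially, and then $H^2(\mathrm{SL}_n(\mathbb{Z});Z(\pi_1(M)))=0$ because $H^2(\mathrm{SL}_n(\mathbb{Z});\mathbb{Z})=0$; Lemma \ref{split} therefore splits the extension for tori, nilmanifolds, and every aspherical $M$ in the stated range, with no appeal to Theorem \ref{th3}. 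The center is only an obstruction for proper finite-index subgroups $\Gamma'$, where the $H^2$ vanishing fails --- that is, exactly in the regime your first gap forces you into.
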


The natural action of $\mathrm{SL}_{n}(\mathbb{Z})$ on the torus $T^{n}$
shows that the bound of $r$ in Conjecture \ref{conj2} cannot be improved.
Note that the upper bound of the dimension $r$ in Conjecture \ref{conj2} is $%
n-1,$ while that of Conjecture \ref{conj} is $n-2.$

\begin{lemma}
\label{lemlast}(\cite{ye181}, Theorem 1.2) Let $M^{r}$ be an aspherical
manifold$.$ A group action of $\mathrm{SL}_{n}(\mathbb{Z})$ $(n\geq 3)$ on $%
M^{r}$ $(r\leq n-1)$ by homeomorphisms is trivial if and only if the induced
group homomorphism $\mathrm{SL}_{n}(\mathbb{Z})\rightarrow \mathrm{Out}(\pi
_{1}(M))$ is trivial. In particular, Conjecture \ref{conj2} holds if the set
of group homomorphisms 
\begin{equation*}
\mathrm{Hom}(\mathrm{SL}_{n}(\mathbb{Z}),\mathrm{Out}(\pi _{1}(M)))=1.
\end{equation*}
\end{lemma}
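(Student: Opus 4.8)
The plan is to reduce the triviality of the action to the triviality of the induced map on outer automorphisms, and this reduction will proceed through the mapping-class-type obstruction combined with a centerless-ness / superrigidity argument, in a manner parallel to the proof of the surface case (the lemma on actions on $\Sigma_g$) given earlier in the excerpt. Let $f:\mathrm{SL}_n(\mathbb{Z})\to\mathrm{Homeo}(M)$ be a group homomorphism, where $M^r$ is aspherical with $r\leq n-1$. The forward implication is immediate: if the action is trivial, then every element acts as the identity, so the induced map on $\pi_1(M)$, and hence on $\mathrm{Out}(\pi_1(M))$, is trivial. The substance is the reverse implication, so I assume the composite $\mathrm{SL}_n(\mathbb{Z})\to\mathrm{Homeo}(M)\to\mathrm{Out}(\pi_1(M))$ is trivial and must conclude that $f$ itself is trivial.

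First I would record that, since $M$ is aspherical, it is a $K(\pi,1)$ with $\pi=\pi_1(M)$, so $M$ is homotopy equivalent to the universal cover quotient and homeomorphisms of $M$ induce the full outer-automorphism data. The key structural input is that the natural map $\mathrm{Homeo}(M)\to\mathrm{Out}(\pi)$ has kernel detected by the identity component: more precisely, if $g\in\mathrm{SL}_n(\mathbb{Z})$ maps trivially to $\mathrm{Out}(\pi)$, then $f(g)$ is homotopic to the identity. I would then lift the action to the universal cover $\widetilde{M}$ to obtain, after passing to a finite-index subgroup if necessary, an extension
\begin{equation*}
1\rightarrow \pi \rightarrow G^{\ast }\rightarrow \mathrm{SL}_n(\mathbb{Z})\rightarrow 1,
\end{equation*}
where $G^{\ast}<\mathrm{Homeo}(\widetilde{M})$. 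Because the image in $\mathrm{Out}(\pi)$ is trivial, $\mathrm{SL}_n(\mathbb{Z})$ acts trivially (up to inner automorphisms) on $\pi$, and I would invoke the Margulis--Kazhdan structure of normal subgroups (Lemma \ref{normal}) to handle the kernel of $f$: if $\ker f\neq 1$ then $\operatorname{Im}f$ is finite, and one argues separately that a finite group acting on $\pi$ trivially through $\mathrm{Out}$ forces triviality; if $\ker f=1$, the action is effective.

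The main obstacle, and the heart of the argument, is to show that an effective lift of $\mathrm{SL}_n(\mathbb{Z})$ to $\mathrm{Homeo}(\widetilde{M})$ cannot exist when $r\leq n-1$. Here I would split the extension above using the cohomological splitting criterion (Lemma \ref{split}): one needs $Z(\pi)$ to be controlled and $H^2(\mathrm{SL}_n(\mathbb{Z});Z(\pi))$ to vanish, which for aspherical manifolds with centerless or suitably rigid fundamental group gives a genuine action of $\mathrm{SL}_n(\mathbb{Z})$ on $\widetilde{M}\cong\mathbb{R}^r$ (or a contractible $r$-manifold). At that point I would bring in the torsion obstruction used throughout the survey: $\mathrm{SL}_n(\mathbb{Z})$ contains $(\mathbb{Z}/3)^{[n/2]}$, and by Theorem \ref{bvacy} no effective action of $\mathrm{SL}_n(\mathbb{Z})$ on a contractible (acyclic) space $\mathbb{R}^r$ with $r<n$ exists, giving the desired contradiction. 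The delicate point I expect to wrestle with is the control of $Z(\pi)$ and the passage between a possibly non-centerless $\pi$ and the clean splitting hypothesis; handling non-trivial center may require either an a priori reduction (replacing $M$ by a cover or quotient that kills the center) or an appeal to the specific structure of $\pi$ forced by asphericity and the dimension bound, and this is where the argument must be made airtight rather than routine.
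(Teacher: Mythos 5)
Your overall route coincides with the paper's: lift the action to the universal cover to get an extension $1\to\pi\to G\to \mathrm{SL}_n(\mathbb{Z})\to 1$ with $G<\mathrm{Homeo}(\widetilde{M})$, split it via Lemma \ref{split}, and then kill the resulting action on the contractible manifold $\widetilde{M}$ by Theorem \ref{bvacy}. However, the two points you leave loose are genuine gaps, and both have resolutions you did not find. First, the center. You treat $Z(\pi)\neq 1$ as an unresolved ``delicate point'' and propose passing to covers or quotients, or assuming $\pi$ centerless; none of this is needed. Since $M$ is aspherical, $\pi$ is torsion-free with $\mathrm{cd}(\pi)=r\leq n-1$, so $Z(\pi)$ is a torsion-free abelian group of rank $<n$; this is the paper's observation, and it then notes that $\mathrm{SL}_n(\mathbb{Z})$ cannot act non-trivially on such a group (even more simply: once the outer action on $\pi$ is trivial, every conjugation is inner, and inner automorphisms restrict to the identity on $Z(\pi)$, so the action on the center is automatically trivial). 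Combined with $H^2(\mathrm{SL}_n(\mathbb{Z});\mathbb{Z})=0$ this gives $H^2(\mathrm{SL}_n(\mathbb{Z});Z(\pi))=0$, so Lemma \ref{split} applies with no extra hypothesis on $\pi$. Relatedly, your ``passing to a finite-index subgroup if necessary'' is both unnecessary (the lifting extension exists for the full image, since every homeomorphism lifts to $\widetilde{M}$) and harmful, since triviality of a finite-index subgroup's action would not give triviality of the whole action.

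Second, your treatment of the non-effective case is wrong: the claim that a finite group acting on an aspherical manifold trivially through $\mathrm{Out}(\pi)$ must act trivially is false — finite cyclic groups act on the torus $T^r$ by translations, effectively and homotopically trivially. The correct observation, which makes your case division on $\ker f$ (and the appeal to Lemma \ref{normal}) unnecessary, is that Theorem \ref{bvacy} forbids any non-trivial homomorphism $\mathrm{SL}_n(\mathbb{Z})\to\mathrm{Homeo}(\widetilde{M})$, effective or not. So one pulls the lifting extension back along $f$ to obtain an extension of the full group $\mathrm{SL}_n(\mathbb{Z})$ by $\pi$, splits it as above to get $s:\mathrm{SL}_n(\mathbb{Z})\to\mathrm{Homeo}(\widetilde{M})$ with each $s(g)$ a lift of $f(g)$, and concludes from Theorem \ref{bvacy} that $s$ is trivial; since the identity of $\widetilde{M}$ covers only the identity of $M$, this forces $f(g)=\mathrm{id}$ for all $g$, handling arbitrary kernels at once.
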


\begin{proof}[Outline of the proof]
It is clear that when the group action is trivial, the induced group
homomorphism is trivial. Conversely, suppose that $\mathrm{SL}_{n}(\mathbb{Z}%
)$ acts on $M$ effectively. We have an exact sequence%
\begin{equation*}
1\rightarrow \pi _{1}(M)\rightarrow G\rightarrow \mathrm{SL}_{n}(\mathbb{Z}%
)\rightarrow 1,
\end{equation*}%
where $G<\mathrm{Homeo}(\tilde{M}),$ the homeomorphism group of the
universal cover. Since the dimension of $M$ is smaller than $n,$ the rank of
the center $Z(\pi _{1}(M))$ is smaller than $n$ as well by considering the
cohomological dimension. This implies that the action of $\mathrm{SL}_{n}(%
\mathbb{Z})$ on $Z(\pi _{1}(M))$ is trivial. It is known that $H^{2}(\mathrm{%
SL}_{n}(\mathbb{Z});\mathbb{Z})=0.$ By Lemma \ref{split}, the exact sequence
is splitting. Then we have a group action of $\mathrm{SL}_{n}(\mathbb{Z})$
on the universal cover $\tilde{M},$ which is contractible. A result of
Bridson and Vogtmann \cite{bv} (see also Theorem \ref{bvacy}) implies that
the action of $\mathrm{SL}_{n}(\mathbb{Z})$ on the acyclic space $\tilde{M}$
(and thus on $M$) is trivial.
\end{proof}

\bigskip

The previous lemma reduces Conjecture \ref{conj2} to an algebraic question:
whether any group homomorphism from $\mathrm{SL}_{n}(\mathbb{Z})$ to $%
\mathrm{Out}(\pi _{1}(M)$ is trivial. This is the case when $\pi _{1}(M)$ is
nilpotent.

\begin{theorem}
(\cite{ye181}, Theorem 1.4) Let $M^{r}$ be an aspherical manifold$.$ If the
fundamental group $\pi _{1}(M)$ is finitely generated nilpotent, any group
action of $\mathrm{SL}_{n}(\mathbb{Z})$ $(n\geq 3)$ on $M^{r}$ $(r\leq n-1)$
by homeomorphisms is trivial.
\end{theorem}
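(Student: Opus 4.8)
The plan is to use Lemma \ref{lemlast} to convert the problem into an algebraic statement about $\mathrm{Out}(\pi_1(M))$, and then combine the representation rigidity of $\mathrm{SL}_n(\mathbb{Z})$ with the structure of the automorphism group of a finitely generated nilpotent group. Write $\Gamma=\pi_1(M)$. Since $M$ is aspherical, $\Gamma$ acts freely and properly discontinuously on the contractible universal cover $\widetilde{M}$, an $r$-manifold; hence $\Gamma$ has cohomological dimension at most $r$ and in particular is torsion-free. Being finitely generated torsion-free nilpotent, its Hirsch length satisfies $h(\Gamma)\le r\le n-1<n$. By Lemma \ref{lemlast} it is enough to show that every homomorphism $\psi:\mathrm{SL}_n(\mathbb{Z})\to\mathrm{Out}(\Gamma)$ is trivial.

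The first step is to record the structure of $\mathrm{Out}(\Gamma)$. A finitely generated torsion-free nilpotent group embeds as a cocompact lattice in its Malcev completion $N$, a simply connected nilpotent Lie group whose Lie algebra $\mathfrak{n}$ has dimension $h(\Gamma)$, and every automorphism of $\Gamma$ extends uniquely to $N$ and hence to $\mathfrak{n}$. This realizes $\mathrm{Aut}(\Gamma)$, and then $\mathrm{Out}(\Gamma)$, as an arithmetic group inside the linear algebraic group $\mathbf{A}=\mathrm{Aut}(\mathfrak{n})$ modulo its inner part (Grunewald--Segal). The inner automorphisms form a connected unipotent normal subgroup, so the reductive quotient of $\mathbf{A}$ is unaffected, while the action on the abelianization defines a homomorphism $\mathbf{A}\to\mathrm{GL}(\mathfrak{n}/[\mathfrak{n},\mathfrak{n}])$ with unipotent kernel. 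Consequently $\mathrm{Out}(\Gamma)$ contains a normal nilpotent subgroup $\mathcal{N}$ whose quotient is an arithmetic subgroup of a reductive group embedded in $\mathrm{GL}(\mathfrak{n}/[\mathfrak{n},\mathfrak{n}])$, a linear group of degree $d_1=\dim\mathfrak{n}/[\mathfrak{n},\mathfrak{n}]\le h(\Gamma)<n$.

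Now I would run $\psi$ through this structure. Composing with $\mathrm{Out}(\Gamma)\to\mathrm{Out}(\Gamma)/\mathcal{N}$ produces a linear representation $\mathrm{SL}_n(\mathbb{Z})\to\mathrm{GL}_{d_1}(\mathbb{C})$ of degree $d_1<n$. By Margulis superrigidity any such representation either extends to a rational representation of $\mathrm{SL}_n(\mathbb{R})$ or has finite image; the smallest nontrivial rational representation of $\mathrm{SL}_n(\mathbb{R})$ is the $n$-dimensional standard one, so the extension is impossible and the image is finite. But the finite quotients of $\mathrm{SL}_n(\mathbb{Z})$ furnished by the congruence subgroup property, essentially the groups $\mathrm{SL}_n(\mathbb{Z}/m)$, possess no nontrivial complex representations of degree less than $n$; hence the composite is trivial and $\psi(\mathrm{SL}_n(\mathbb{Z}))$ lands in the nilpotent subgroup $\mathcal{N}$. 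Since $\mathrm{SL}_n(\mathbb{Z})$ equals its own commutator subgroup for $n\ge3$, it is perfect, and a perfect group admits no nontrivial homomorphism to a nilpotent group. Therefore $\psi$ is trivial, and Lemma \ref{lemlast} gives the conclusion.

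The main obstacle is the rigidity input: one must exclude every nontrivial finite-dimensional representation of $\mathrm{SL}_n(\mathbb{Z})$ of degree below $n$, and crucially also rule out a nontrivial \emph{finite} image, which is where the congruence quotients and their representation theory enter. Passing to the Malcev completion is what makes the remainder routine: it replaces the automorphism action on the possibly torsion-bearing lower central quotients by a faithful linear action on $\mathfrak{n}$, so the kernel of the reductive projection is genuinely torsion-free nilpotent and perfectness finishes the argument with no separate analysis of torsion. An alternative avoiding the Malcev machinery is to filter $\mathrm{Out}(\Gamma)$ by the lower central series of $\Gamma$, apply the same rigidity to each graded piece $\gamma_i/\gamma_{i+1}$ (whose rank is less than $n$), and control the remaining shearing automorphisms through P. Hall's theorem that the stability group of a central series is nilpotent; this route ends the same way but requires one to verify directly that the induced action on the torsion subgroups of the $\gamma_i/\gamma_{i+1}$ is forced to be trivial.
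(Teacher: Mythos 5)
Your proposal is correct in substance and follows the same skeleton as the paper: the survey proves this theorem by invoking Lemma \ref{lemlast} to reduce everything to the algebraic statement that every homomorphism $\mathrm{SL}_{n}(\mathbb{Z})\rightarrow \mathrm{Out}(\pi _{1}(M))$ is trivial, and then appeals to \cite{ye181} for that statement when $\pi _{1}(M)$ is finitely generated nilpotent. You make the same reduction, noting correctly that asphericity forces $\pi _{1}(M)$ to be torsion-free of Hirsch length at most $r<n$. Where you differ is in how you dispose of the algebraic half. You pass to the Malcev completion, observe that the kernel of $\mathrm{Aut}(\mathfrak{n})\rightarrow \mathrm{GL}(\mathfrak{n}/[\mathfrak{n},\mathfrak{n}])$ is unipotent and contains the images of the inner automorphisms, and so present $\mathrm{Out}(\pi _{1}(M))$ as (nilpotent)-by-(linear of degree $d_{1}<n$); low-dimensional representation rigidity plus perfectness of $\mathrm{SL}_{n}(\mathbb{Z})$ then finish. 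The argument in \cite{ye181} is closer to the ``alternative'' you sketch at the end: it filters by the lower central series, invokes P.\ Hall's theorem that the stability group of a central series is nilpotent, and applies representation rigidity to each graded quotient, whose ranks sum to the Hirsch length. Your Malcev route is a genuine streamlining: the unipotence of the algebraic kernel replaces Hall's theorem, and since the automorphisms act on the Lie algebra rather than on the possibly torsion-bearing quotients $\gamma _{i}/\gamma _{i+1}$, no separate analysis of torsion is needed.

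The one step you should tighten is the rigidity input. The dichotomy you attribute to superrigidity --- ``extends to a rational representation of $\mathrm{SL}_{n}(\mathbb{R})$ or has finite image'' --- is not the literal statement: superrigidity leaves open the case of a \emph{bounded} (relatively compact) image, and a bounded-but-infinite image in $\mathrm{GL}_{d_{1}}(\mathbb{C})$ is not excluded by superrigidity alone; ruling it out requires an additional integrality argument (the representation is conjugate into algebraic-integer entries bounded at all places, hence has finite image) or an appeal to finiteness of the character variety. Likewise, your assertion that the congruence quotients $\mathrm{SL}_{n}(\mathbb{Z}/m)$ have no nontrivial complex representations of degree below $n$ is true but is itself a nontrivial fact (Landazuri--Seitz type bounds, plus Clifford theory for the congruence kernel, plus the congruence subgroup property to know all finite quotients arise this way). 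The cleanest repair is simply to cite the known theorem --- proved by elementary matrix computations in \cite{Ye12}, which is in this paper's bibliography --- that every homomorphism $\mathrm{SL}_{n}(\mathbb{Z})\rightarrow \mathrm{GL}_{d}(\mathbb{C})$ with $d<n$, $n\geq 3$, is trivial. With that reference in place of your superrigidity sketch, your argument is complete, and the final step (a perfect group has no nontrivial homomorphism to a nilpotent group) is exactly right.
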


Similarly, when $M$ is an almost flat manifold (i.e. a manifold finitely
covered by a nil-manifold) with dihedral, symmetric or alternating holonomy
group, it satisfies Conjecture \ref{conj2} (see \cite{ye181}, Lemma 6.5).

\begin{corollary}
Let $M$ be a closed aspherical manifold. Any action of the real special
linear group $\mathrm{SL}_{n}(\mathbb{R}),n\geq 3,$ on $M$ by homeomorphism
is trivial.
\end{corollary}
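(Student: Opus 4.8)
The plan is to run the argument behind Lemma \ref{lemlast} with the lattice $\mathrm{SL}_{n}(\mathbb{Z})$ replaced by the ambient Lie group $\mathrm{SL}_{n}(\mathbb{R})$, the new ingredient being a cardinality observation that removes any need for a dimension hypothesis. Write $\pi =\pi _{1}(M)$ and let $\phi :\mathrm{SL}_{n}(\mathbb{R})\rightarrow \mathrm{Homeo}(M)$ be a homomorphism. The first step is to control the composite $\bar{\phi}:\mathrm{SL}_{n}(\mathbb{R})\rightarrow \mathrm{Out}(\pi )$ induced on fundamental groups. Since $M$ is a closed manifold, $\pi $ is finitely presented, so $\mathrm{Aut}(\pi )$, and hence $\mathrm{Out}(\pi )$, is countable (an automorphism is determined by the images of finitely many generators). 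On the other hand, because $\mathrm{PSL}_{n}(\mathbb{R})$ is simple as an abstract group for $n\geq 3$, every proper normal subgroup of $\mathrm{SL}_{n}(\mathbb{R})$ is central, hence finite; as $\mathrm{SL}_{n}(\mathbb{R})$ is uncountable, every one of its nontrivial quotients is uncountable. Therefore any homomorphism from $\mathrm{SL}_{n}(\mathbb{R})$ to a countable group is trivial, and in particular $\bar{\phi}$ is trivial. This is exactly the point at which the real group, unlike the lattice, needs no bound on $\dim M$.

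Next I would upgrade triviality of $\bar{\phi}$ to triviality of $\phi $, following the template for Conjecture \ref{conj2}. Triviality of $\bar{\phi}$ means each $\phi (g)$ induces an inner automorphism of $\pi $; in particular $\mathrm{SL}_{n}(\mathbb{R})$ acts trivially on the center $Z(\pi )$, again with no appeal to $\dim M$. Lifting the action to the universal cover $\tilde{M}$ produces an extension
\begin{equation*}
1\rightarrow \pi \rightarrow G^{\ast }\rightarrow \mathrm{SL}_{n}(\mathbb{R})\rightarrow 1,
\end{equation*}
with $G^{\ast }<\mathrm{Homeo}(\tilde{M})$. I would first dispose of the residual inner part: the induced map $\mathrm{SL}_{n}(\mathbb{R})\rightarrow \mathrm{Inn}(\pi )\cong \pi /Z(\pi )$ is once more a homomorphism into a countable group, hence trivial, so after adjusting the section $\mathrm{SL}_{n}(\mathbb{R})$ centralizes $\pi $. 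With $\mathrm{SL}_{n}(\mathbb{R})$ acting trivially on $\pi $ and the relevant central cohomology vanishing, Lemma \ref{split} splits the sequence and yields an honest $\mathrm{SL}_{n}(\mathbb{R})$-action on the contractible space $\tilde{M}$ that commutes with the deck group. A trivial such lift forces the original $\phi $ itself to be trivial.

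The main obstacle is precisely this last step: the triviality of the lifted action on $\tilde{M}$, together with the cohomological splitting that feeds it. For the lattice, triviality on the contractible cover is exactly Theorem \ref{bvacy}, but that result rests on Smith theory and controls only the range $\dim \tilde{M}<n$, since every finite subgroup of $\mathrm{SL}_{n}(\mathbb{R})$ has $p$-rank bounded in terms of $n$. Consequently a genuinely dimension-free conclusion cannot be squeezed out of $p$-rank or Euler-characteristic estimates such as Theorem \ref{ye}, and this is the real crux of the statement. I expect the resolution to be softness rather than Smith theory: a lift of the action to $\tilde{M}$ compatible with the cocompact free $\pi $-action should be impossible to make nontrivial without manufacturing either a nontrivial countable quotient of $\mathrm{SL}_{n}(\mathbb{R})$ or a faithful low-dimensional linear representation of a lattice, both of which are ruled out. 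Finally, verifying the vanishing $H^{2}(\mathrm{SL}_{n}(\mathbb{R});Z(\pi ))=0$ needed for Lemma \ref{split}, where $\mathrm{SL}_{n}(\mathbb{R})$ is taken as an abstract (discrete) group whose cohomology is notoriously delicate, is the technical point I would expect to demand the most care.
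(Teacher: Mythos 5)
Your first step coincides with the paper's: countability of $\mathrm{Out}(\pi _{1}(M))$ for closed $M$, together with simplicity of $\mathrm{PSL}_{n}(\mathbb{R})$ (so that every proper normal subgroup of $\mathrm{SL}_{n}(\mathbb{R})$ is central and every nontrivial quotient is uncountable), forces the map to $\mathrm{Out}(\pi _{1}(M))$ to be trivial with no dimension hypothesis. But from there your proposal stalls exactly where you yourself say it does, and the gap is genuine: you have no argument that a homotopically trivial action must be trivial. Your plan --- lift to $\tilde{M}$, split the extension via Lemma \ref{split}, then kill the resulting action on the contractible cover --- cannot be completed as stated. The splitting requires $H^{2}(\mathrm{SL}_{n}(\mathbb{R});Z(\pi ))=0$ with $\mathrm{SL}_{n}(\mathbb{R})$ viewed as a discrete group, which you rightly flag as delicate and which nothing in the paper supplies (Lemma \ref{lemlast} invokes $H^{2}(\mathrm{SL}_{n}(\mathbb{Z});\mathbb{Z})=0$, a fact about the lattice, not about the Lie group made discrete); and even granting the splitting, the only available tool for killing an action on the contractible cover is Theorem \ref{bvacy}, whose Smith-theoretic proof needs $\dim \tilde{M}<n$ --- a hypothesis the corollary does not make. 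Your closing hope that ``softness'' will rule out a nontrivial lift is a conjecture, not a proof.

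The paper closes this gap with a different, genuinely dimension-free ingredient that your proposal never touches: the theorem of Gottlieb--Lee--Ozaydin (\cite{glo}, Theorem 2.5) that a compact Lie group --- in particular a finite group --- acting effectively and homotopically trivially on a closed aspherical manifold must be abelian. Triviality of the map to $\mathrm{Out}(\pi _{1}(M))$ says precisely that the action is homotopically trivial. If the action were nontrivial, its kernel would be central (simplicity again), so the non-abelian finite subgroup $A_{n+1}<\mathrm{SL}_{n}(\mathbb{R})$, which meets the center trivially, would act effectively and homotopically trivially on $M$, contradicting Gottlieb--Lee--Ozaydin. This bypasses the lift to $\tilde{M}$, the splitting lemma, and the discrete-group cohomology of $\mathrm{SL}_{n}(\mathbb{R})$ altogether, and it is exactly what makes the statement true for closed aspherical $M$ of arbitrary dimension rather than only for $\dim M<n$ (the paper's appeal to Lemma \ref{lemlast} handles only the latter range). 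In short: right opening move, but the crux of the corollary is an appeal to a known rigidity theorem for homotopically trivial finite group actions, and that idea is absent from your proposal.
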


\begin{proof}
When $M$ is compact, the fundamental group $\pi _{1}(M)$ is finitely
presented and $\mathrm{Out}(\pi _{1}(M))$ is countable. Therefore, any group
homomorphism $\mathrm{SL}_{n}(\mathbb{R})\rightarrow \mathrm{Out}(\pi
_{1}(M))$ is trivial (note that $\mathrm{PSL}_{n}(\mathbb{R})$ is a simple
group). Lemma \ref{lemlast} implies that the action of $\mathrm{SL}_{n}(%
\mathbb{Z})$ (and thus $\mathrm{SL}_{n}(\mathbb{R}))$ is trivial, when $\dim
(M)<n$. Actually, a compact Lie group (including finite group) acting
effectively and homotopically trivially on $M$ must be abelian (see \cite%
{glo}, Theorem 2.5). The group $\mathrm{SL}_{n}(\mathbb{R})$ contains a
non-abelian finite subgroup (for example, the alternating group $A_{n+1}$).
This implies that the action of $\mathrm{SL}_{n}(\mathbb{R})$ is trivial for
any $M.$
\end{proof}

\bigskip

\noindent \textbf{Acknowledgements}

The author is grateful to Prof. Xuezhi Zhao at the Capital Normal University
and Prof. Yang Su at the Chinese Academy of Sciences for many helpful
discussions. This work is supported by NSFC (No. 11971389).

\bigskip

NYU Shanghai, 1555 Century Avenue, Shanghai, 200122, China.

NYU-ECNU Institute of Mathematical Sciences, NYU Shanghai, 3663 Zhongshan
Road North, Shanghai, 200062, China.

E-mail: sy55@nyu.edu, yeshengkui@gmail.com

\end{document}